\documentclass{amsart}
\usepackage[left=2.5cm,    
right=2.5cm,   
top=2cm,       
bottom=2cm]{geometry} 
\usepackage{amssymb}
\usepackage{amsmath}
\usepackage{mathtools}
\usepackage{booktabs}
\usepackage{float}
\usepackage{verbatim}

\usepackage{bm} 
\usepackage[colorlinks,linkcolor=cyan,citecolor=cyan]{hyperref}
\usepackage[hang,flushmargin]{footmisc} 
\usepackage[square,comma,sort&compress,numbers]{natbib} 
\usepackage{mathrsfs} 
\usepackage[font=footnotesize,skip=0pt,textfont=rm,labelfont=rm]{caption,subcaption} 

\usepackage{amsthm}
\newtheorem{theorem}{Theorem}[section]
\newtheorem{proposition}{Proposition}[section]
\newtheorem{lemma}{Lemma}[section]

\newtheorem{corollary}{Corollary}[section]
\newtheorem{remark}{Remark}[section]

\begin{document}
	\title{The stability of Yang-Mills connections on $\delta$-pinched manifolds}
	\author{Xiaoli Han}
	\address{Xiaoli Han\\ Math department of Tsinghua university\\ Beijing\\ 100084\\ China\\} \email{hanxiaoli@mail.tsinghua.edu.cn}
	\author{Yang Wen}
	\address{Yang Wen\\ Beijing International Center for Mathematical Research\\ Peking University\\ Beijing\\ 100871\\ China\\} \email{2406397066@pku.edu.cn}
	\begin{abstract}
		In this article, we establish pinching conditions under which all weakly stable Yang-Mills connections on compact manifolds are flat. As a corollary, we provide a dimension-dependent constant $\delta(n)$ and prove that there exist no non-flat weakly stable Yang-Mills connections on $\delta(n)$-pinched compact simply-connected Riemannian manifolds.
	\end{abstract}
	
	\subjclass{AMS Mathematics Subject Classification. 53C55,\ 32W20.}
	\begin{abstract}
		In this paper we prove that there is a neighborhood in the  $C^2$ topology of the usual metric on the
		Euclidean sphere $S^n (n\geq 5)$ such that there is no nontrivial weakly stable Yang-Mills connections for any metric $\tilde{g}$ in this neighborhood. We also study the stability of Yang-Mills connections on the warped product manifolds.
	\end{abstract}
	
	\vspace{1ex}
	{\noindent\small{\bf Keywords:}
		Yang-Mills connection, stability, variation}
	
	\vspace{1ex}
	{\noindent\small{\bf MSC (2020):}
		58E15, 53C07, 53C21}
	\maketitle
	\section{Introduction}\ 
	
	Let $(M,g)$ be an n-dimensional compact Riemannian manifold and $E$ be a vector bundle of rank $r$ over $M$ with structure group $G$, where $G$ is a compact Lie group. The classical Yang-Mills functional is given by
	\begin{align*}
		YM(\nabla)=\int_M|R^\triangledown|^2dV,
	\end{align*}
	where $\nabla$ is a connection on $E$ and $R^\triangledown$ is its curvature. We denote $d^\triangledown$ to be the exterior differential operator on $E$ induced by $\nabla$ and $\delta^\triangledown$ is its adjoint operator. The Euler-Lagrange equation of $YM$ is
	\begin{align*}
		\delta^\triangledown R^\triangledown=0
	\end{align*}
	and the critical point of $YM$ is called Yang-Mills connection. 
	
	We are interested in the stability of a Yang-Mills connection. We call $\nabla$ is weakly stable if the second variation of $YM$ is non-negative at $\nabla$, i.e.
	\begin{align*}
		\frac{d^2}{dt^2}YM(\nabla^t)\mid_{t=0}\ge0
	\end{align*}
	for any curve of connections $\nabla^t$ such that $\nabla^0=\nabla$.
	
	The study of geometric stability originated in minimal surface theory with the seminal Lawson-Simons conjecture \cite{LS}, which established the nonexistence of stable minimal $k$-submanifolds ($2\leq k\leq n-2$) in $\frac{1}{4}$-pinched compact Riemannian manifolds. Advancing the conjecture, Howard \cite{Ho} proved that there exists $\delta(n,p)\in(\frac14,1)$ such that there is no $p$-dimensional stable minimal surface in $n$-dimensional $\delta(n,p)$-pinched compact Riemannian manifolds. As a special case, Hu-Wei \cite{HW} proved that there is no stable minimal surface in a compact simply connected hypersurface in $\mathbb{R}^{n+1}$ or $S^{n+1}$ with $\frac15$-pinched curvature. Considering the perturbation of metrics, Franz-Trinca \cite{FT} established non-existence of closed stable minimal $k$-submanifolds in generic conformal spheres $(S^n,\tilde g)$ for dimensions $2\le k\le n-\delta^{-1}$, where $\delta$ dependents on $\tilde g$.
	
	Motivated by the fundamental analogies between minimal surfaces, harmonic maps, and Yang-Mills fields, significant efforts have been made to adapt techniques from minimal surface theory to investigate harmonic maps and Yang-Mills configurations. The harmonic mapping conjecture posits that every stable harmonic map from an arbitrary Riemannian manifold into a compact simply connected Riemannian manifold with $\frac14$ -pinched curvature must be constant. Building on foundational rigidity theorems for harmonic maps, 	Leung \cite{Le} established the non-existence of non-constant stable harmonic maps from arbitrary Riemannian domains to the standard sphere $S^n$ for $n \geq 3$. This result was significantly extended by Okayasu \cite{Ok} through the introduction of a dimension-independent curvature constraint. Specifically, Okayasu proved that for any simply connected compact 
	Riemannian manifold with $\delta$-pinched sectional curvature satisfying $\delta \geq \frac{5}{6}$, all stable harmonic maps into such target manifolds must be constant. 
	
	A natural conjecture arises: whether there can exist no non-flat weakly stable Yang-Mills connections on compact simply connected Riemannian manifolds with $\frac14$-pinched curvature? Bourguignon-Lawson \cite{BL} proved that any weakly stable Yang-Mills connections on $S^n$ ($n\ge5$) must be constant. They also proved that if the structure group $G=SU(2), SU(3)$, or $U(2)$, then the weakly stable Yang-Mills connection on $S^4$ is self-dual or anti self-dual. Kobayashi-Ohnita-Takeuchi \cite{KOT} proved that the Yang-Mills connection on any compact irreducible symmetric space other than $S^n$ ($n\ge5$), $P^2(\mathbb{C}ay)$, $E_6/F_4$ and compact simple Lie groups is a weakly stable Yang-Mills connection. Utilizing Lawson's constructive method,  Ni \cite{Ni} proved that the Morse index of $\operatorname{YM}$ on $S^n (n\ge5)$ is larger than $m+1$, and the smallest eigenvalue of the Jacobi operator of $\operatorname{YM}$ is not greater than $4-n$.
	
	The principal aim of this investigation is to determine a curvature pinching threshold that precludes the existence of non-flat weakly stable Yang-Mills connections. Inspired by Howard's \cite{Ho} variational construction, we prove the following theorem.
	\begin{theorem}\label{main theorem}
		Let $M$ be a compact $n$-dimensional Riemannian manifold with $n\ge5$ and $\operatorname{inj}(M) \geq c_n\pi$ for some $c_n \in (0,1]$. There exists $\delta = \delta(n, \operatorname{inj}(M)) > 0$ such that if the sectional curvature satisfies $\delta < K \leq 1$, then every weakly stable Yang-Mills connection on $M$ is flat.
	\end{theorem}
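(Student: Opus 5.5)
We follow the Bourguignon--Lawson strategy on $S^n$, but replace the conformal vector fields of the round sphere by Howard-type test fields built from distance functions, in the spirit of \cite{Ho}. For a Yang--Mills connection $\nabla$ with curvature $R=R^\nabla$ and any vector field $X$ on $M$, we take the variation $\nabla^t=\nabla+t\,\iota_X R$. The second variation then reads
\begin{align*}
\frac{d^2}{dt^2}YM(\nabla^t)\Big|_{t=0}=2\int_M \Big(|d^\nabla(\iota_X R)|^2+\langle R,[\iota_XR\wedge\iota_XR]\rangle\Big)dV .
\end{align*}
Bourguignon--Lawson showed, using the Bochner--Weitzenb\"ock formula for $\Delta^\nabla$ on $\mathfrak g_E$-valued $2$-forms together with $d^\nabla R=0$ and $\delta^\nabla R=0$, that this expression can be rewritten as an integral quadratic in $R$. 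Its coefficients involve only $\nabla X$, $\nabla^2X$ and the Riemann tensor acting on $X$. On $S^n$, with $X=\nabla f$ and $f$ a first eigenfunction, summing over an orthonormal family of such fields yields $(4-n)\int|R|^2<0$ unless $R=0$.

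Our plan is to redo this computation on $M$ with a carefully chosen family of test fields. For each point $p\in M$ we set $X_p=\nabla \phi(r_p)$, where $r_p=d(p,\cdot)$ and $\phi$ is modeled on $-\cos$. Since $\operatorname{inj}(M)\ge c_n\pi$, the function $r_p$ is smooth on the ball $B(p,c_n\pi)$. We rescale $\phi$ (or cut it off smoothly near $r=c_n\pi$) so that $X_p$ is smooth on $M$. We then average the second variation over $p\in M$, that is, we integrate the resulting integrand in $p$ with respect to $dV(p)$, exactly as Howard averages over a family of "approximate conformal fields".

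By stability, each $p$ contributes a nonnegative quantity, so the averaged integral $\int_M\!\int_M Q_p(x)\,dV(p)\,dV(x)$ is also $\ge0$, where $Q_p(x)$ denotes the integrand at $x$ coming from $X_p$.

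The key pointwise step is to estimate, at a fixed point $x$, the average over $p$ of the Weitzenb\"ock-type integrand. We use Hessian comparison (Rauch) for $\delta<K\le1$. This gives
\begin{align*}
\cot r\,(g-dr^2)\le \nabla^2 r\le \sqrt\delta\cot(\sqrt\delta r)(g-dr^2).
\end{align*}
Hence $\nabla X_p=\phi''dr^2+\phi'\nabla^2 r$ differs from the round model $-\cos r\,g$ (up to normalization) by an error $\varepsilon(\delta,c_n)$, which tends to $0$ as $\delta\to1$, provided $c_n\pi$ stays below the conjugate radius bound. Third derivatives appear only through $\operatorname{Ric}(X)$ terms after integrating by parts, and these are likewise controlled by the pinching.

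We also need the directional average $\int_M dr_p\otimes dr_p\,dV(p)$ at $x$ to be close to $\frac{1}{n}\,\mathrm{vol}\cdot g$. This follows from using geodesic polar coordinates at $x$ and volume comparison, which shows the directional distribution is nearly uniform. Plugging these estimates in, the averaged second variation becomes
\begin{align*}
\le \int_M\Big(C_0(4-n)+C_1\varepsilon(\delta,\operatorname{inj}(M))\Big)|R|^2\,dV .
\end{align*}
Here $C_0>0$ is an explicit weight depending on $\phi$ and $c_n$. For $n\ge5$ and $\delta$ close enough to $1$, this bound is negative unless $R\equiv0$, which gives flatness.

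The main obstacle is the cut-off issue when $c_n<1$. The balls $B(p,c_n\pi)$ need not cover all the directions that give the clean sphere identity, and the cutoff produces boundary terms in $\phi$. We would first try choosing $\phi(r)=-\cos(r/c_n)$, accepting a worse constant $C_0(n,c_n)$. We would then check that the leading term still has the sign $4-n$ times a positive quantity coming from the $\phi''$ versus $\phi'\cot r$ balance. If that fails, we would choose $\phi$ to solve the one-dimensional optimization that makes the averaged model coefficient negative for $n\ge5$; this is essentially Howard's approach. The dependence of $\delta$ on $\operatorname{inj}(M)$ enters precisely through this step.
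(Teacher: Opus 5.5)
For $\operatorname{inj}(M)\ge\pi$, your plan is the paper's argument. By Klingenberg this is the only case needed for the simply connected corollary. The shared ingredients are: the variations $i_{V_y}R^\nabla$ with $V_y=\operatorname{grad}(f\circ\rho_y)$, where $f=-\cos$ on $[0,\pi]$ and $f$ is constant beyond; Hessian comparison for $DV_y$; pinching bounds on the full curvature tensor; Fubini over $y$ with the spherical average $\int_{S^{n-1}}Q_x(w,w)\,dw=\frac2n|S^{n-1}|\,|R^\nabla|^2(x)$; volume comparison; and the fact that at $\delta=1$ the radial integral is a positive multiple of $4-n$, followed by continuity in $\delta$. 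The paper simply makes your $\varepsilon(\delta)$ explicit through the bounds $f_i,g_i,h_i$ and the function $\Phi_\delta$.

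Three details of your sketch need correcting, none of them fatal.
\begin{enumerate}
\item The difference $DV_y-\cos\rho\,g$ is not uniformly small. Near $\rho=\pi$ the transverse eigenvalues may reach $\sqrt\delta\sin\rho\cot(\sqrt\delta\rho)$, which differs from $\cos\rho$ by $O(1)$ on a layer of width $O(1-\sqrt\delta)$. So smallness holds only after integrating against the volume density. Use the explicit comparison bounds, so that the error depends only on $n$ and $\delta$.
\item After integrating by parts, the surviving second derivatives are not just Ricci terms. The terms $V_{k,kl}V_l$ and $V_{k,il}V_l\langle F_{ij},F_{jk}\rangle$ remain. They are controlled only because they are radial derivatives, via the Riccati identity (\ref{D^2V}).
\item No smoothing is needed. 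Since $f'(\pi)=0$, $V_y$ is Lipschitz and vanishes on $\partial B_\pi(y)$, so $\mathscr{L}(i_{V_y}R^\nabla)$ is defined and the boundary terms there vanish.
\end{enumerate}

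The genuine gap is the case $c_n<1$, which you leave open. Your claim that $\varepsilon(\delta,c_n)\to0$ as $\delta\to1$ is false for fixed $c_n<1$. With the profile $-\cos(r/c_n)$ the field is not conformal even on the round $S^n$. So the limiting coefficient is not $C_0(4-n)$ but some $A(n,c_n)$ whose sign you do not control.

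Optimizing the profile cannot rescue arbitrary $c_n$ either. As $c\to0$ the model computation becomes Euclidean after rescaling. There the averaged form is nonnegative for every profile, because a $U(1)$ connection with parallel nonzero curvature on a large flat torus is Yang--Mills, non-flat and stable (the abelian functional is convex).

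The paper does not prove the statement for arbitrary $c_n$. It defines $c_n$ as a threshold: the infimum of those $c$ for which $\int_0^{c\pi}\Phi^{c\pi}_\delta v_\delta\,d\rho<0$ for some $\delta$. This set is nonempty since $c=1$ works. It then defines $\delta(n,R)$ as the corresponding infimum in $\delta$.

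You should do the same:
\begin{enumerate}
\item Let the error depend on the pair $(\delta,c)$.
\item Use the joint continuity of the averaged model coefficient at $(1,1)$, where it is a positive multiple of $4-n$.
\item Conclude for $c$ and $\delta$ sufficiently close to $1$.
\end{enumerate}
In this formulation $c_n$ is a dimensional constant, not an arbitrary number in $(0,1]$.
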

	If $(M, g)$ be a simply connected compact Riemannian manifold with dimensional $n>4$ and sectional curvature $K \in (\delta K_0, K_0]$ for constants $K_0, \delta > 0$. After the homothety $\tilde{g} = K_0 g$, we may assume the sectional curvature satisfies $K \in (\delta, 1]$. By Klingenberg injectivity radius estimate (see \cite{KW} 2.6.A.1), we have $c_n=1$ and thus $\operatorname{inj}(M)=\pi$. Therefore, we have the following deduction.
	\begin{corollary}\label{main corollary}
		Let $M$ be a simply connected compact Riemannian manifold with dimensional $n\ge5$. Then there exists $\delta(n)\in(0,1)$ such that if $M$ is $\delta(n)$-pinched, then any weakly stable Yang-Mills connection on $M$ is flat.
	\end{corollary}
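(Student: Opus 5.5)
The corollary should follow from Theorem \ref{main theorem} once we know the injectivity radius is bounded below by a constant independent of $M$. We fix $c_n=1$ and take $\delta(n):=\max\{\delta(n,\pi),\tfrac14\}$, where $\delta(n,\pi)$ is the constant of Theorem \ref{main theorem} with $\operatorname{inj}(M)=\pi$. We then check that every simply connected compact $M$ of dimension $n\ge5$ that is $\delta(n)$-pinched satisfies the hypotheses of the theorem, after rescaling.

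\textbf{Step 1 (normalization).} Suppose $\delta(n)K_0< K\le K_0$ for some $K_0>0$. Rescaling the metric by a constant factor multiplies sectional curvatures by the inverse factor, so we may pass to a homothetic metric with $\delta(n)<K\le 1$. Weak stability and flatness of a Yang--Mills connection are invariant under homotheties. Indeed, in dimension $n$ the functional $YM$ scales by the positive constant $\lambda^{(n-4)/2}$ under $g\mapsto\lambda g$. Hence critical points, the sign of the second variation, and the vanishing of $R^\nabla$ are all unchanged. So it suffices to prove the corollary for the normalized metric.

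\textbf{Step 2 (injectivity radius).} For the normalized metric we have $K\le 1$. Since $\delta(n)>\tfrac14$, the manifold is simply connected, compact and strictly $\tfrac14$-pinched. Klingenberg's injectivity radius estimate (\cite{KW} 2.6.A.1) applies, and in odd dimensions already under $K>0$, simply connected. In even dimensions it applies for orientable $M$ with $0<K\le1$, and simply connected manifolds are orientable. It gives $\operatorname{inj}(M)\ge\pi/\sqrt{\max K}\ge\pi$. Combined with $K\le1$, this yields $\operatorname{inj}(M)\ge \pi$, so the hypothesis of Theorem \ref{main theorem} holds with $c_n=1$.

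\textbf{Step 3 (dependence of the constant).} This is the only delicate point. Theorem \ref{main theorem} produces $\delta$ depending on $\operatorname{inj}(M)$, but we need a single $\delta(n)$ for all such $M$. I would argue that the theorem's constant depends only on a lower bound for the injectivity radius. A larger injectivity radius only enlarges the region where Howard-type test variations can be built, so the constant obtained for the bound $c_n\pi=\pi$ works for every $M$ with $\operatorname{inj}(M)\ge\pi$. If the theorem's statement is read literally with $\operatorname{inj}(M)$ exact, we would instead note that $\operatorname{inj}(M)\le\pi/\sqrt{\min K}<\pi/\sqrt{\delta}$, so $\operatorname{inj}(M)$ lies in a compact interval $[\pi,2\pi]$. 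We would then take the supremum of the theorem's constants over that interval, which requires monotonicity or continuity of $\delta(n,\cdot)$ from its proof. Finally, enlarging $\delta(n)$ to lie in $(\tfrac14,1)$ keeps the curvature hypothesis of the theorem satisfied. Theorem \ref{main theorem} then gives that every weakly stable Yang--Mills connection on $M$ is flat.
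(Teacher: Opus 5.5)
Your proposal is correct and follows the paper's own argument: a homothety to $\delta<K\le 1$, then Klingenberg's estimate giving $\operatorname{inj}(M)\ge\pi$, then Theorem~\ref{main theorem} with $c_n=1$ and the constant computed at radius $\pi$. You also add care the paper omits: scale invariance of weak stability, forcing $\delta(n)\ge\tfrac14$ so that Klingenberg applies, and observing that only the lower bound $\operatorname{inj}(M)\ge\pi$ is used (the paper's ``$\operatorname{inj}(M)=\pi$'' really means this). One slip in Step 2 is that you have the parity reversed: in even dimensions (orientable) $0<K\le 1$ alone suffices, whereas odd dimensions genuinely need strict $\tfrac14$-pinching (Berger spheres show positivity is not enough); your choice of $\delta(n)$ already supplies that pinching, so the argument is unaffected.
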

	\begin{remark}
		Ohnita-Pan \cite{OP}  established results analogous to Corollary \ref{main corollary}, but their approach differs from ours, consequently yielding distinct constants $\delta(n)$.
	\end{remark}
	\section{Preliminary}
	\subsection{The connections and curvatures on vector bundles}\ 
	
	Assume $(M,g)$ be a $n$ dimensional compact Riemannian manifold and $\mathscr{X}(M)$ be the collection of vector fields on $M$. Let $E\to M$ be a rank $r$ vector bundle over $M$ with a compact Lie group $G$ as its structure group. We also assume $\langle\ ,\ \rangle$ be the Riemannian metric on $E$ compatible with the action of $G$ and $\mathfrak{g}_E$ be the adjoint bundle of $E$. Let $\nabla:\Omega^0(\mathfrak{g}_E)\to\Omega^1(\mathfrak{g}_E)$ be the connection on $E$ compatible with the metric $\langle\ ,\ \rangle$. Locally, $\nabla$ takes the form
	\begin{align*}
		\nabla=d+A,
	\end{align*}
	where $A\in\Omega^1(\mathfrak{g}_E)$.
	
	For any connection $\nabla$ on $E$, the corresponding curvature $R^\nabla$ is given by
	\begin{align*}
		R^\nabla=dA+\frac12[A\wedge A],
	\end{align*}
	where
	\begin{align*}
		\frac12[A\wedge A](X,Y)=[A(X),A(Y)].
	\end{align*}
	
	The induced inner product on $\Omega^p(\mathfrak{g}_E)$ is given by
	\begin{align*}
		\langle\phi,\psi\rangle=\frac12Tr(\phi^T\psi)
	\end{align*}
	for any $\phi,\psi\in\Omega^0(\mathfrak{g}_E)$ and
	\begin{align*}
		\langle\phi,\psi\rangle_g=\frac1{p!}\sum_{1\le i_1,...,i_p\le p}\langle\phi(e_{i_1},...,e_{i_p}),\psi(e_{i_1},...,e_{i_p})\rangle,
	\end{align*}
	for any $\phi,\psi\in\Omega^p(\mathfrak{g}_E)$, where $\{e_i\mid1\le i\le n\}$ is an orthogonal basis of $TS^n$ respect to the metric $g$. After integrating, we get the global inner product of $\Omega^p(\mathfrak{g}_E)$, that is
	\begin{align*}
		(\phi,\psi)_g=\int_{S^n}\langle\phi,\psi\rangle_g dV_g.
	\end{align*}
	The connection $\nabla$ induces a connection $d^\nabla:\Omega^p(\mathfrak{g}_E)\to\Omega^{p+1}(\mathfrak{g}_E)$ on $\Omega^p(\mathfrak{g}_E)$, and we assume $\delta^\nabla:\Omega^p(\mathfrak{g}_E)\to\Omega^{p-1}(\mathfrak{g}_E)$ be the formal adjoint operator of $d^\nabla$. On local coordinates, we have
	\begin{align*}
		d^\nabla\phi(X_1,...,X_{p+1})&=\sum_{i=1}^{p+1}(-1)^{i+1}\nabla_{X_i}\phi(X_1,...,\hat{X_i},...,X_{p+1}),\\
		\delta^\nabla\phi(X_1,...,X_{p-1})&=-\sum_{i=1}^n\nabla_{e_i}\phi(e_i,X_1,...,X_{p-1})
	\end{align*}
	for any $\phi\in\Omega^p(\mathfrak{g}_E)$. 
	
	We can define the Laplace–Beltrami operator $\Delta^\nabla$ by
	\begin{align*}
		\Delta^\nabla\phi=d^\nabla\delta^\nabla\phi+\delta^\nabla d^\nabla\phi
	\end{align*}
	and the rough Laplacian operator $\nabla^\ast\nabla$ by
	\begin{align*}
		\nabla^\ast\nabla\phi=-\sum_{i=1}^n(\nabla_{e_i}\nabla_{e_i}\phi-\nabla_{D_{e_i}e_i}\phi).
	\end{align*}
	For $\phi\in\Omega^1(\mathfrak{g}_E)$ and $\psi\in\Omega^2(\mathfrak{g}_E)$, define
	\begin{align*}
		\mathfrak{R}_g^\nabla(\phi)(X)&=\sum_{i=1}^n[R^\nabla(e_i,X),\phi(e_i)],\\
		\mathfrak{R}_g^\nabla(\psi)(X,Y)&=\sum_{i=1}^n[R^\nabla(e_i,X),\psi(e_i,Y)]-[R^\nabla(e_i,Y),\psi(e_i,X)].
	\end{align*}
	Then we have the following Bochner–Weizenböck formula first introduced by Bourguignon-Lawson.
	\begin{theorem}\cite{BL}
		For any $\phi\in\Omega^1(\mathfrak{g}_E)$ and $\psi\in\Omega^2(\mathfrak{g}_E)$, we have
		\begin{align*}
			&\Delta^\nabla\phi=\nabla^\ast\nabla+\phi\circ\operatorname{Ric}+\mathfrak{R}_g^\nabla(\phi),\\
			&\Delta^\nabla\psi=\nabla^\ast\nabla+\psi\circ(\operatorname{Ric}\wedge Id+2R_M)+\mathfrak{R}_g^\nabla(\psi),
		\end{align*}
		where $R_M$ is the curvature tensor of $M$ and
		\begin{itemize}
			\item $\operatorname{Ric} :TM\to TM$ is the Ricci transformation defined by
			\[\operatorname{Ric}\left( X \right) =\sum_jR_M(X,e_j)e_j ,\]
			\item $ \operatorname{Ric}\wedge \operatorname{Id} $ is the extension of the Ricci transformation $\operatorname{Ric}$ to $\wedge^2 TM$ given by
			\[ \operatorname{Ric}\wedge \operatorname{Id}(X,Y)=\operatorname{Ric}\wedge \operatorname{Id}(X\wedge Y)=\operatorname{Ric}(X)\wedge Y+X\wedge \operatorname{Ric}(Y),\]
			\item The composite map $\psi\circ R_M :\wedge^2 TM \to \Omega^0 \left( \mathfrak{g}_E \right)  $ is defined by
			\[\psi\circ R_M(X,Y)=\psi\circ R_M(X\wedge Y )  =\frac{1}{2}\sum_{j=1}^{n} \psi( e_j, R_M(X,Y)e_j) .\]
		\end{itemize}
	\end{theorem}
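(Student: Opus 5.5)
The plan is to prove the Bochner–Weitzenböck formulas by a pointwise computation in a normal frame. Fix $x\in M$ and a local orthonormal frame $\{e_i\}$ with $(De_i)_x=0$. At $x$ the rough Laplacian is $\nabla^\ast\nabla\phi=-\sum_i\nabla_{e_i}\nabla_{e_i}\phi$, and the formulas for $d^\nabla$, $\delta^\nabla$ become sums of covariant derivatives, where $\nabla$ denotes the coupled connection on $T^\ast M\otimes\mathfrak{g}_E$. Both sides are tensorial, so it suffices to check each identity at $x$ when evaluated on frame vectors.

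For $\phi\in\Omega^1(\mathfrak{g}_E)$ I will compute both pieces directly:
\begin{align*}
d^\nabla\delta^\nabla\phi(X)&=-\sum_i\nabla_X\nabla_{e_i}\phi(e_i),\\
\delta^\nabla d^\nabla\phi(X)&=-\sum_i\bigl(\nabla_{e_i}\nabla_{e_i}\phi(X)-\nabla_{e_i}\nabla_X\phi(e_i)\bigr).
\end{align*}
Adding them, the first term of the second line is $\nabla^\ast\nabla\phi(X)$. What remains is the commutator
\[
\sum_i\bigl(\nabla_{e_i}\nabla_X-\nabla_X\nabla_{e_i}\bigr)\phi(e_i),
\]
which is the curvature of the coupled connection applied to $\phi$, evaluated on $(e_i,X)$ and then contracted. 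That curvature splits into two parts:
\begin{itemize}
\item the Riemannian part, which acts on the form slot and gives $-\phi(R_M(e_i,X)e_i)$; summed over $i$ this yields $\phi(\operatorname{Ric}X)$, with sign conventions adjusted so that $\operatorname{Ric}(X)=\sum_jR_M(X,e_j)e_j$;
\item the bundle part, which acts on $\mathfrak{g}_E$ by the adjoint action and gives $[R^\nabla(e_i,X),\phi(e_i)]$, i.e. $\mathfrak{R}^\nabla_g(\phi)(X)$.
\end{itemize}

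For $\psi\in\Omega^2(\mathfrak{g}_E)$ I follow the same scheme. I expand $d^\nabla\delta^\nabla\psi(X,Y)$ and $\delta^\nabla d^\nabla\psi(X,Y)$; the latter contains three terms from $d^\nabla\psi(e_i,X,Y)$. After cancellation, the terms $-\sum_i\nabla_{e_i}\nabla_{e_i}\psi(X,Y)$ give the rough Laplacian. What remains is two commutators,
\[
\sum_i[\nabla_{e_i},\nabla_X]\psi(e_i,Y)-[\nabla_{e_i},\nabla_Y]\psi(e_i,X).
\]
Each commutator again splits into a bundle part and a Riemannian part:
\begin{itemize}
\item the bundle part gives $\sum_i[R^\nabla(e_i,X),\psi(e_i,Y)]-[R^\nabla(e_i,Y),\psi(e_i,X)]=\mathfrak{R}^\nabla_g(\psi)(X,Y)$;
\item the Riemannian part acts on both slots of $\psi$, producing four terms: $\psi(\operatorname{Ric}X,Y)$, $\psi(X,\operatorname{Ric}Y)$, and two terms of the form $\psi(e_i,R_M(e_i,X)Y)$.
\end{itemize}

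The main obstacle is regrouping the two leftover terms $\psi(e_i,R_M(e_i,X)Y)-\psi(e_i,R_M(e_i,Y)X)$ into $2\,\psi\circ R_M(X\wedge Y)=\sum_j\psi(e_j,R_M(X,Y)e_j)$. For this I will use the first Bianchi identity $R_M(e_i,X)Y+R_M(X,Y)e_i+R_M(Y,e_i)X=0$ together with the skew-symmetry of $\psi$, and I must track the factor $\tfrac12$ in the definition of $\psi\circ R_M$ and the $\tfrac1{p!}$ normalization consistently. The Ricci terms assemble into $\psi\circ(\operatorname{Ric}\wedge\operatorname{Id})$. I will fix the curvature sign convention at the outset so that on the round sphere the zeroth-order term reduces to the known multiples $(n-1)\phi$ and $2(n-2)\psi$, and use this as a consistency check.
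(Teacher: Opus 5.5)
Your proposal is correct. It is the standard normal-frame commutator computation behind the cited Bourguignon--Lawson result; the paper itself gives no proof. Your bookkeeping closes as planned: the leftover Riemannian terms are $-\sum_i\psi\bigl(e_i,R_M(e_i,X)Y-R_M(e_i,Y)X\bigr)=\sum_i\psi\bigl(e_i,R_M(X,Y)e_i\bigr)=2\,\psi\circ R_M(X\wedge Y)$ by the first Bianchi identity alone. Your sphere check $2(n-1)-2=2(n-2)$ correctly fixes the convention $R_M(X,Y)=D_XD_Y-D_YD_X-D_{[X,Y]}$, which is the one under which the statement (with $\operatorname{Ric}(X)=\sum_jR_M(X,e_j)e_j$) holds. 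It is worth fixing this explicitly, because the paper's own conventions are not uniform; compare the commutator formula $[D_{e_l},D_{e_i}]=R(e_i,e_l)+D_{[e_i,e_l]}$ in Section~3.
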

	\section{Estimate for the second variation of Yang-Mills functional}\label{sec:estimate of 2nd var}
	\subsection{Setting and Notation}\ 
	
		In this section, we will compute the second variation of the Yang-Mills functional along special variational directions. Let $M$ be a compact $n$-dimensional ($n\ge5$) Riemannian manifold with sectional curvature $K\in(\delta,1]$ for some constant $\delta\in(0,1)$ and $R:=\operatorname{inj}(M)\geq c_n\pi$ for some $c_n \in (0,1]$. Assume $\nabla$ be a Yang-Mills connection and $\nabla^t=\nabla+tB$ be a smooth curve in the affine space of connections for some $B\in\Omega^1(\mathfrak{g}_E)$. The second variation is
		\begin{equation}
			\mathscr{L}(B):=\frac{d^2}{d t^2} YM(\nabla+tB)|_{t=0}=\int_{S^n}\langle\delta^\triangledown d^\triangledown B+\mathfrak{R}^\triangledown(B), B\rangle dV.
		\end{equation}
		By direct calculation, we can prove the following lemma.
		\begin{lemma}\label{second variation}
			For any vector field $V\in\mathscr{X}(M)$, we have
			\begin{align*}
				\mathscr{L}(i_VR^\triangledown)=\int_M&\langle R^\triangledown(R(e_i,V)e_i,e_j)+R^\triangledown(e_i,R(e_i,V)e_j)-\nabla_{D_{e_i}V}R^\triangledown(e_i,e_j)-\nabla_{e_i}R^\triangledown(D_{e_i}V,e_j)+R^\triangledown(D^\ast DV,e_j)\\
				&+R^\triangledown(D^2_{e_i,e_j}V,e_i),R^\triangledown(V,e_j)\rangle dV.
			\end{align*}
		\end{lemma}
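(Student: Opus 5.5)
We compute the two pieces of $\mathscr{L}(B)$ for $B=i_VR^\nabla$, i.e. $B(X)=R^\nabla(V,X)$, and feed them into the Bochner–Weitzenböck formula. Since $\nabla$ is Yang-Mills we have $\delta^\nabla R^\nabla=0$, and the Bianchi identity gives $d^\nabla R^\nabla=0$. The strategy is to rewrite $\delta^\nabla d^\nabla B+\mathfrak{R}^\nabla(B)$ as
\[
\Delta^\nabla B-d^\nabla\delta^\nabla B+\mathfrak{R}^\nabla(B),
\]
and then use the Weitzenböck formula for $1$-forms:
\[
\Delta^\nabla B=\nabla^\ast\nabla B+B\circ\operatorname{Ric}+\mathfrak{R}^\nabla(B).
\]
All computations are done at a point with a normal frame $\{e_i\}$, so $D_{e_i}e_j=0$ there, and repeated indices are summed.

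\textbf{Step 1: the rough Laplacian of $B$.} Expand by the Leibniz rule:
\[
\nabla_{e_i}\nabla_{e_i}\bigl(R^\nabla(V,e_j)\bigr)=(\nabla^2_{e_i,e_i}R^\nabla)(V,e_j)+2(\nabla_{e_i}R^\nabla)(D_{e_i}V,e_j)+R^\nabla(D^2_{e_i,e_i}V,e_j).
\]
So $\nabla^\ast\nabla B=i_V(\nabla^\ast\nabla R^\nabla)-2\nabla_{e_i}R^\nabla(D_{e_i}V,\cdot)+i_{D^\ast DV}R^\nabla$.

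Now apply the $2$-form Weitzenböck formula to $R^\nabla$. Since $\Delta^\nabla R^\nabla=0$, we get
\[
\nabla^\ast\nabla R^\nabla=-R^\nabla\circ(\operatorname{Ric}\wedge Id+2R_M)-\mathfrak{R}^\nabla(R^\nabla).
\]
Contracting with $V$ produces:
\begin{itemize}
\item Ricci terms, which will cancel against $B\circ\operatorname{Ric}$ up to the term $-R^\nabla(\operatorname{Ric}V,e_j)$;
\item curvature terms $R^\nabla(e_i,R(e_i,V)e_j)$-type contributions coming from $2R_M$ (using the first Bianchi identity on $R_M$);
\item bracket terms $[R^\nabla,R^\nabla]$.
\end{itemize}

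\textbf{Step 2: the term $d^\nabla\delta^\nabla B$.} Compute
\[
\delta^\nabla B=-\nabla_{e_i}\bigl(R^\nabla(V,e_i)\bigr)=-(\delta^\nabla R^\nabla)(V)\cdot(\pm1)-R^\nabla(D_{e_i}V,e_i)=-R^\nabla(D_{e_i}V,e_i),
\]
using the Yang-Mills equation. Differentiating once more in the direction $e_j$ yields $\nabla_{e_j}R^\nabla(D_{e_i}V,e_i)+R^\nabla(D^2_{e_j,e_i}V,e_i)$.

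When we pair against $B=R^\nabla(V,e_j)$, these contribute:
\begin{itemize}
\item the term $R^\nabla(D^2_{e_i,e_j}V,e_i)$ in the statement, after commuting second derivatives; the commutator $D^2_{e_i,e_j}V-D^2_{e_j,e_i}V=R(e_i,e_j)V$ is what produces the $R^\nabla(R(e_i,V)e_i,e_j)$-type term together with the Ricci terms;
\item a term $\nabla_{e_j}R^\nabla(D_{e_i}V,e_i)$.
\end{itemize}
The second of these is converted with the differential Bianchi identity $d^\nabla R^\nabla=0$ into $\nabla_{e_i}R^\nabla(D_{e_i}V,e_j)$ and $\nabla_{D_{e_i}V}R^\nabla(e_i,e_j)$ pieces. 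This is where the two first-order terms in the statement come from: the $-2\nabla_{e_i}R^\nabla(D_{e_i}V,\cdot)$ from Step 1 combines with these to leave exactly $-\nabla_{D_{e_i}V}R^\nabla(e_i,e_j)-\nabla_{e_i}R^\nabla(D_{e_i}V,e_j)$.

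\textbf{Step 3: collect the zeroth-order terms.} The remaining bracket terms $\mathfrak{R}^\nabla(B)$ and $i_V\mathfrak{R}^\nabla(R^\nabla)$ must cancel in the pairing with $B$, or combine into something of the form $\langle[R^\nabla(e_i,e_j),R^\nabla(V,e_i)],R^\nabla(V,e_j)\rangle$. By ad-invariance of the inner product and the Jacobi identity, a term of this type vanishes after summation. This is the step I expect to be the main obstacle: it requires careful bookkeeping of signs, factors of $2$ and the $\frac12$ normalizations, together with the Jacobi identity, to verify that every bracket term disappears and that the Ricci and $R_M$ terms reduce precisely to $R^\nabla(R(e_i,V)e_i,e_j)+R^\nabla(e_i,R(e_i,V)e_j)$.

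Throughout, integration by parts is needed only to move $\delta^\nabla$ to its adjoint form; everything else is pointwise.
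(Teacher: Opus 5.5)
Your route is valid, but it is not the paper's. The paper only says ``by direct calculation'', and the natural direct computation is shorter. Using $d^\nabla R^\nabla=0$ one gets $d^\nabla(i_VR^\nabla)(X,Y)=(\nabla_VR^\nabla)(X,Y)+R^\nabla(D_XV,Y)+R^\nabla(X,D_YV)$. Then apply $\delta^\nabla$:
\begin{itemize}
\item four of the six terms of the lemma appear verbatim;
\item $\sum_i(\nabla_{e_i}R^\nabla)(e_i,D_{e_j}V)$ vanishes by the Yang--Mills equation;
\item $-\sum_i(\nabla^2_{e_i,V}R^\nabla)(e_i,e_j)$ is handled by the Ricci identity together with $\nabla_V(\delta^\nabla R^\nabla)=0$.
\end{itemize}
In the last item, the $TM$-part of the Ricci identity is exactly $R^\nabla(R(e_i,V)e_i,e_j)+R^\nabla(e_i,R(e_i,V)e_j)$. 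Its $\mathfrak{g}_E$-part is $-\sum_i[R^\nabla(e_i,V),R^\nabla(e_i,e_j)]=-\mathfrak{R}^\nabla(B)(e_j)$, which cancels the zeroth-order term of $\mathscr{L}$.

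Your detour through $\delta^\nabla d^\nabla=\Delta^\nabla-d^\nabla\delta^\nabla$ and the two Weitzenb\"ock formulas reaches the same pointwise identity for $\delta^\nabla d^\nabla B+\mathfrak{R}^\nabla(B)$. The price is several extra cancellations: the Ricci terms, a factor-two bracket cancellation, a first Bianchi identity for $R^\nabla\circ R_M$, a commutator of $D^2V$, and a differential Bianchi identity. In neither route is any integration by parts needed.

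Two points of your bookkeeping need correcting.

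\textbf{Step 3.} This is not an obstacle, and the mechanism you suggest there is wrong. From the definitions,
\begin{itemize}
\item $\mathfrak{R}^\nabla(R^\nabla)(V,Z)=2\sum_i[R^\nabla(e_i,V),R^\nabla(e_i,Z)]$;
\item $\mathfrak{R}^\nabla(i_VR^\nabla)(Z)=\sum_i[R^\nabla(e_i,Z),R^\nabla(V,e_i)]=\sum_i[R^\nabla(e_i,V),R^\nabla(e_i,Z)]$.
\end{itemize}
So the bracket contributions $2\mathfrak{R}^\nabla(B)-i_V\mathfrak{R}^\nabla(R^\nabla)$ cancel identically, before any pairing with $B$. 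Your fallback claim should be dropped, because it is false. The quantity $\sum_{i,j}\langle[R^\nabla(e_i,e_j),R^\nabla(V,e_i)],R^\nabla(V,e_j)\rangle$ is exactly $\langle\mathfrak{R}^\nabla(B),B\rangle$. It is symmetric, not antisymmetric, under $i\leftrightarrow j$, and it is nonzero in general. The Jacobi identity plays no role.

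\textbf{Curvature terms.} These are distributed differently from what you describe. Use $R(X,Y)=[D_X,D_Y]-D_{[X,Y]}$, the convention under which the quoted Weitzenb\"ock formulas hold. Then:
\begin{itemize}
\item the Ricci terms alone give $-R^\nabla(\operatorname{Ric}V,Z)=R^\nabla(R(e_i,V)e_i,Z)$;
\item the first Bianchi identity gives $-2(R^\nabla\circ R_M)(V,Z)=-\sum_iR^\nabla(e_i,R(V,Z)e_i)=2\sum_iR^\nabla(e_i,R(e_i,V)Z)$;
\item the commutator coming from $-d^\nabla\delta^\nabla B$ is $\sum_iR^\nabla(R(Z,e_i)V,e_i)=-\sum_iR^\nabla(e_i,R(e_i,V)Z)$.
\end{itemize}
The commutator term therefore removes one of the two copies from the $R_M$ term; it does not produce the Ricci-type term. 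Your treatment of the first-order terms is correct: the differential Bianchi identity applied to $(\nabla_ZR^\nabla)(D_{e_i}V,e_i)$, combined with $-2(\nabla_{e_i}R^\nabla)(D_{e_i}V,Z)$, leaves exactly the two first-order terms of the statement.
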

		Fix a point $y\in M$,  The geodesic distance function $\rho_y(x):=dist(x,y)$ is smooth when $x\notin Cut(y)$, the cut locus of $y$. We also define
		\begin{align*}
			f_R(t)=\left\{
			\begin{array}{ll}
				-\cos(\frac\pi Rt),&|t|\le R,\\
				1,&|t|\ge R
			\end{array}
			\right.
		\end{align*}
		be a $C^2$ function. Let $V^R_y$ be the vector field defined by
		\begin{align}\label{V_y}
			V^R_y(x)=\operatorname{grad}(f_R\circ\rho_y)={f_R}'(\rho_y)\operatorname{grad}(\rho_y).
		\end{align}
		Leveraging continuity in the second variation $\mathscr{L}(i_{V_y^R})$ with respect to $R$, we always assume $R=\pi$ in the following calculations. Let $f:=f_\pi$ and $V_y:=V_y^\pi$.
		
		Note that $\operatorname{supp}(V_y) \subset \overline{B_\pi}(y)$ and $V_y \in W^{2,2}(M)$, which ensures that $\mathscr{L}(i_{V_y}R^\nabla)$ is well-defined. Let $\{e_1,\dots,e_n\}$ be a local orthonormal frame for $TM$ over $B_\pi(y)$. Expressing $V_y$ in this frame:
		\begin{equation}
			\begin{split}
				V_y &= \sum_{i=1}^n V_i e_i, \\
				D_{e_i} V_y &= \sum_{j=1}^n V_{j,i} e_j, \\
				D_{e_j}D_{e_i} V_y &= \sum_{k=1}^n V_{k,ji} e_k.
			\end{split}
		\end{equation}
		The symmetry $V_{i,j} = V_{j,i}$ holds for all $i,j$, as verified by:
		\begin{align*}
			V_{i,j}= \langle D_{e_j} \operatorname{grad}(f \circ \rho_y), e_i \rangle= \operatorname{Hess}(f \circ \rho_y)(e_i,e_j)= V_{j,i}.
		\end{align*}
		Differentiating this relation with respect to $e_k$ yields $V_{i,jk} = V_{j,ik}$ for all $i,j,k$. Using the geodesic equation $D_{\operatorname{grad}(\rho_y)}\!\operatorname{grad}(\rho_y) = 0$ and the relation $V = f'(\rho_y)\operatorname{grad}(\rho_y)$, we derive
		\begin{align*}
			\sum_{i=1}^n V_i V_{j,i} = \langle D_V V, e_j \rangle = f''(\rho_y) V_j
		\end{align*}
		for any index $j$. Differentiating with respect to $e_l$ and applying the curvature commutator formula 
		$[D_{e_l}, D_{e_i}] = R(e_i, e_l) + D_{[e_i,e_l]}$ yields:
		\begin{align}
			\sum_{i=1}^n V_i V_{j,li} &= \frac{f'''(\rho_y)}{f'(\rho_y)} V_j V_l + f''(\rho_y) V_{j,l}- \sum_{i=1}^n V_{i,j} V_{i,l} - \sum_{i,k=1}^n V_i V_k R_{jkli}
		\end{align}
		where the curvature components are defined as $R_{jkli} = \langle R(e_l, e_i)e_k, e_j \rangle$. On $B_\pi(y)\backslash\{y\}$ with $f(\rho) = \sin\rho$, this simplifies to:
		\begin{align}\label{D^2V}
			\sum_{i=1}^n V_i V_{j,li} &= -V_j V_l + \cos\rho_yV_{j,l}- \sum_{i=1}^n V_{i,j} V_{i,l} - \sum_{i,k=1}^n V_i V_k R_{jkli}.
		\end{align}
		\subsection{First-order derivative estimate for the vector field $V_y$}\ 
		
		In this section, we shall estimate those terms in the second variation $\mathscr{L}(i_{V_y}R^\triangledown)$ that involve first derivatives of $V_y$. For any $x\in B_\pi(y)\backslash\{y\}$ with $\rho_y(x)=\rho$, let $\gamma(t)$ be the geodesic curve on $B_\pi(y)$ such that $\gamma(0)=y$ and $\gamma(\rho)=x$. Define projection operators $P_1,P_2:T_xM\to T_xM$ to be
		\begin{align*}
			P_1(X)=\langle X,\gamma'(\rho)\rangle\gamma'(\rho),\ P_2(X)=X-P_1(X).
		\end{align*}
		Let $\mathcal{Q}:X\mapsto D_XV_y$ be a linear map on $T_xM$. By the definition of $V_y$, we have
		\begin{align*}
			\mathcal{Q}(X)=f''(\rho)X(\rho)\operatorname{grad}(\rho_y)+f'(\rho)D_X\operatorname{grad}(\rho_y)=\cos\rho P_1(X)+\sin\rho\sum_{i=1}^n\operatorname{Hes}(\rho_y)(P_2(X),e_i)e_i.
		\end{align*}
		Suppose $\lambda_1,...,\lambda_n$ be the eigenvalues of $\mathcal{Q}$ and $e_1,...,e_n$ be the corresponding eigenvectors, where $\lambda_1=\cos\rho$ and $e_1=\operatorname{grad}(\rho_y)$. Then we have $V_{i,j}=\lambda_i\delta_{ij}$. Under the assumption that the sectional curvature satisfies $K\in(\delta,1]$ and applying the Hessian comparison theorem (Theorem A, [GH]), we derive
		\begin{equation}\label{lambda_i}
			\begin{split}
				\cos\rho\le\lambda_i&\le\sqrt\delta\sin\rho\cot(\sqrt\delta\rho)+\sin\rho(\cot\rho-\sqrt\delta\cot(\sqrt\delta\rho))P_2\\
				&\le\sqrt\delta\sin\rho\cot(\sqrt\delta\rho)
			\end{split}
		\end{equation}
		for any $i=2,...,n$, where we use $P_2$ is a non-negative operator and $t\cot(t\rho)$ is decreasing respect to $t$ for any $\rho\in(0,\pi)$. Let \(\rho_{\delta} \in \left( \frac{\pi}{2}, \frac{\pi}{2\sqrt{\delta}} \right)\) be the unique solution to \(\tan\bigl(\sqrt{\delta}\,\rho_{\delta}\bigr) + \sqrt{\delta}\, \tan \rho_{\delta} = 0\) and define
		\begin{align*}
			f_1(\rho)=\left\{
			\begin{array}{ll}
				\cos^2\rho,&\rho\in(0,\frac\pi2]\\
				\sqrt\delta\cos\rho\sin\rho\cot(\sqrt\delta\rho),&\rho\in[\frac\pi2,\frac\pi{2\sqrt\delta}]\\
				\delta\sin^2\rho\cot^2(\sqrt\delta\rho),&\rho\in[\frac\pi{2\sqrt\delta},\pi)
			\end{array}
			\right.
		\end{align*}
		\begin{align*}
			f_2(\rho)=\left\{
			\begin{array}{ll}
				\delta\sin^2\rho\cot^2(\sqrt\delta\rho),&\rho\in(0,\rho_\delta]\\
				\cos^2\rho,&\rho\in[\rho_\delta,\pi)
			\end{array}
			\right.
		\end{align*}
		\begin{align*}
			g_1(\rho)=\left\{
			\begin{array}{ll}
				\cos^2\rho,&\rho\in(0,\frac\pi2]\\
				0,&\rho\in[\frac\pi2,\frac\pi{2\sqrt\delta}]\\
				\delta\sin^2\rho\cot^2(\sqrt\delta\rho),&\rho\in[\frac\pi{2\sqrt\delta},\pi)
			\end{array}
			\right.
		\end{align*}
		\begin{align*}
			g_2(\rho)=\left\{
			\begin{array}{ll}
				\delta\sin^2\rho\cot^2(\sqrt\delta\rho),&\rho\in(0,\rho_\delta]\\
				\cos^2\rho,&\rho\in[\rho_\delta,\pi)
			\end{array}
			\right.
		\end{align*}
		\begin{align*}
			h_1(\rho)=\left\{
			\begin{array}{ll}
				\cos^2\rho,&\rho\in(0,\frac\pi2]\\
				\sqrt\delta\sin\rho\cos\rho\cot(\sqrt\delta\rho),&\rho\in[\frac\pi2,\pi)
			\end{array}
			\right.
		\end{align*}
		\begin{align*}
			h_2(\rho)=\left\{
			\begin{array}{ll}
				\sqrt\delta\sin\rho\cos\rho\cot(\sqrt\delta\rho),&\rho\in(0,\frac\pi2]\\
				\cos^2\rho,&\rho\in[\frac\pi2,\pi)
			\end{array}
			\right.
		\end{align*}
		Based on the eigenvalue estimate (\ref{lambda_i}) for $\mathcal{Q}$, we immediately obtain the following lemma:
		\begin{lemma}\label{contolled by fgh}
			For each $x \in B_\pi(y)\backslash\{y\}$ with $\rho_x(y) = \rho$, let $\lambda_1,..., \lambda_n$ denote the eigenvalues of the operator $\mathcal{Q}_x: T_xM \to \mathbb{R}$ defined by $X \mapsto D_X V_y$.
			 We have $\lambda_1=\cos\rho$ and
			\begin{equation}
				\begin{split}
					&f_1(\rho)\le\lambda_i\lambda_j\le f_2(\rho),\\
					&g_1(\rho)\le\lambda_i^2\le g_2(\rho),\\
					&h_1(\rho)\le\cos\rho\lambda_i\le h_2(\rho)
				\end{split}
			\end{equation}
			for any $i=2,...,n$.
		\end{lemma}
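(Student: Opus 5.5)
The plan is to reduce everything to the scalar two-sided bound (\ref{lambda_i}),
\[
\cos\rho\le\lambda_i\le\sqrt\delta\sin\rho\cot(\sqrt\delta\rho)=:u(\rho),\qquad i=2,\dots,n,
\]
and then do an elementary case analysis on the signs of the two endpoints. The equality $\lambda_1=\cos\rho$ holds by construction: $\mathcal{Q}(\operatorname{grad}\rho_y)=\cos\rho\,\operatorname{grad}\rho_y$, because $P_2(\operatorname{grad}\rho_y)=0$. Set $\ell(\rho)=\cos\rho$. Each $\lambda_i$ with $i\ge2$ lies in the interval $I_\rho=[\ell,u]$. The three bounds then ask for the extreme values over $I_\rho$ of $st$ (with $s,t\in I_\rho$), of $s^2$, and of $\ell s$.

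The first step is to record the signs. We have $\ell>0$ on $(0,\pi/2)$ and $\ell<0$ on $(\pi/2,\pi)$. Also $u>0$ on $(0,\pi/(2\sqrt\delta))$ and $u\le0$ on $[\pi/(2\sqrt\delta),\pi)$. Next compare $|\ell|$ with $|u|$. Where both are nonnegative, $u\ge\ell$ is the comparison inequality itself. On $(\pi/2,\pi/(2\sqrt\delta))$ we must compare $u$ with $-\ell$. The equation $u=-\cos\rho$ is equivalent to $\sqrt\delta\tan\rho+\tan(\sqrt\delta\rho)=0$, which is exactly the defining equation of $\rho_\delta$. So $u^2\ge\ell^2$ for $\rho\le\rho_\delta$ and $\ell^2\ge u^2$ for $\rho\ge\rho_\delta$. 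To get this I would check monotonicity: $\tan(\sqrt\delta\rho)+\sqrt\delta\tan\rho$ is increasing on that interval, since both terms increase between their poles. This gives uniqueness, and uniqueness gives the sign change.

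With this in hand the bounds are read off.

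For $\lambda_i^2$: the upper bound is $\max(\ell^2,u^2)$, which equals $g_2$ by the $\rho_\delta$ analysis. For the lower bound:
\begin{itemize}
\item if $I_\rho\subset[0,\infty)$, the minimum is $\ell^2$;
\item if $0\in I_\rho$, i.e.\ $\rho\in[\pi/2,\pi/(2\sqrt\delta)]$, the minimum is $0$;
\item if $I_\rho\subset(-\infty,0]$, the minimum is $u^2$.
\end{itemize}
This gives $g_1$.

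For $\ell\lambda_i$: the function is linear in $\lambda_i$, so its extremes are $\ell^2$ and $\ell u$, ordered according to the sign of $\ell$. This gives $h_1,h_2$ with the switch at $\pi/2$.

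For $\lambda_i\lambda_j$ with $i,j\ge2$: the maximum of $st$ over the square $I_\rho^2$ is $\max(\ell^2,u^2)=f_2$, attained on the diagonal. For the minimum:
\begin{itemize}
\item if $I_\rho\subset[0,\infty)$, it is $\ell^2$;
\item if $I_\rho\subset(-\infty,0]$, it is $u^2$;
\item in the mixed case it is $\ell u\le0$.
\end{itemize}
This matches $f_1$.

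The main obstacle is that this last computation gives a sharp bound only for $i\ne j$, and only as long as the $\lambda_i$ are treated as independent points of $I_\rho$. Since $f_1\le\min(g_1,\dots)$ is consistent, the lemma's inequalities follow in every case. The delicate points are the endpoint conventions at $\pi/2$, $\pi/(2\sqrt\delta)$, and $\rho_\delta$, where the piecewise definitions agree by continuity. One should also check that $\pi/(2\sqrt\delta)<\pi$, which requires $\delta>1/4$. If $\delta\le1/4$, the third pieces are vacuous and the argument only simplifies.
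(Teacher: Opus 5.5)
Your proposal is correct and follows the paper's route. The paper obtains the lemma directly from the two-sided bound $\cos\rho\le\lambda_i\le\sqrt\delta\sin\rho\cot(\sqrt\delta\rho)$ in (\ref{lambda_i}), and your endpoint sign analysis, with $\rho_\delta$ identified as the crossover where $\sqrt\delta\sin\rho\cot(\sqrt\delta\rho)=-\cos\rho$, is exactly the bookkeeping the paper leaves implicit. The diagonal case $i=j$ is not actually an obstacle, since $(\lambda_i,\lambda_i)$ still lies in $I_\rho^2$. Also, on $[\pi/(2\sqrt\delta),\pi)$ the inequality $\ell^2\ge u^2$ follows directly from $\ell\le u\le 0$, not from the monotonicity argument at $\rho_\delta$.
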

		
		\subsection{The second variation formula}\ 
		
		In this section, we will rewrite all terms involving second derivatives of $V_y$ in the second variation to match the form on the left-hand side of (\ref{D^2V}). We extend $V_i,V_{i,j},V_{i,jk}$ to $L^2$-functions on $M$ by setting them to vanish outside $B_\pi(y)\backslash\{y\}$.
		\begin{lemma}\label{2var-D^2V}
			Assume $R^\triangledown=\sum_{i,j=1}^nF_{ij}\omega^i\wedge\omega^j$ on $B_\pi(y)$. The last 4 terms in $\mathscr{L}(i_{V_y}R^\triangledown)$ are given by:\\
			(i)
			\begin{equation}\label{part i}
				\begin{split}
					&-\int_M\langle\nabla_{D_{e_i}V_y}R^\triangledown(e_i,e_j),R^\triangledown(V_y,e_j)\rangle dV\\
					=&\int_M\frac12|R^\triangledown|^2(V_{k,kl}V_l+V_{k,k}V_{l,l})-\frac32V_{k,k}V_{l,i}\langle F_{ij},F_{lj}\rangle+V_kV_lR_{ik}\langle F_{ij},F_{lj}\rangle+V_{k,i}V_{l,k}\langle F_{ij},F_{lj}\rangle\\
					&+\frac12V_{k,il}V_l\langle F_{ij},F_{jk}\rangle+\frac12V_sV_lR_{ksji}\langle F_{ij},F_{kl}\rangle+V_{k,i}V_{l,j}\langle F_{ij},F_{kl}\rangle dV.
				\end{split}
			\end{equation}
			(ii)
			\begin{equation}\label{part ii}
				\begin{split}
					&-\int_M\langle\nabla_{e_i}R^\triangledown(D_{e_i}V_y,e_j),R^\triangledown(V_y,e_j)\rangle dV\\
					=&\int_M\frac12|R^\triangledown|^2(V_{k,kl}V_l+V_{k,k}V_{l,l})-\frac32V_{k,k}V_{l,i}\langle F_{ij},F_{lj}\rangle+V_kV_lR_{ik}\langle F_{ij},F_{lj}\rangle+V_{k,i}V_{l,k}\langle F_{ij},F_{lj}\rangle\\
					&+\frac12V_{k,il}V_l\langle F_{ij},F_{jk}\rangle+\frac12V_sV_lR_{ksji}\langle F_{ij},F_{kl}\rangle+V_{k,i}V_{l,j}\langle F_{ij},F_{kl}\rangle dV.
				\end{split}
			\end{equation}
			(iii)
			\begin{equation}\label{part iii}
				\begin{split}
					\int_M\langle R^\triangledown(D^\ast DV_y,e_j),R^\triangledown(V_y,e_j)\rangle dV=\int_MV_{i,i}V_{l,k}\langle F_{kj},F_{lj}\rangle-\frac12|R^\triangledown|^2(V_{i,il}V_l+V_{i,i}V_{l,l})-V_iV_lR_{ik}\langle F_{kj},F_{lj}\rangle dV.
				\end{split}
			\end{equation}
			(iv)
			\begin{equation}\label{part iv}
				\begin{split}
					\int_M\langle R^\triangledown(D^2_{e_i,e_j}V_y,e_i),R^\triangledown(V_y,e_j)\rangle dV=-\frac12\int_MV_sV_lR_{jsik}\langle F_{ki},F_{jl}\rangle dV,
				\end{split}
			\end{equation}
		where $R_{ik} = \sum_{j} R_{ijik}$ be the Ricci curvature, and we omit the summation symbol for repeated indices in all expressions.
		\end{lemma}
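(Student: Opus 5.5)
Each of the four identities in Lemma \ref{2var-D^2V} will be proved by integration by parts on $M$, using only three inputs: the Yang--Mills equation $\delta^\nabla R^\nabla=0$, i.e. $\nabla_{e_i}F_{ij}=0$; the second Bianchi identity $\nabla_{e_k}F_{ij}+\nabla_{e_i}F_{jk}+\nabla_{e_j}F_{ki}=0$; and the symmetries $V_{i,j}=V_{j,i}$, $V_{i,jk}=V_{j,ik}$ recorded above. Integration by parts over $M$ is justified because $V_y\in W^{2,2}$ is supported in $\overline{B_\pi}(y)$ and $f_\pi$ is $C^2$, so $V_y$ and $DV_y$ vanish on $\partial B_\pi(y)$ and no boundary terms appear. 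The isolated point $y$ is harmless: near $y$ the field $V_y$ is smooth, since $f(\rho)=-\cos\rho$ is an even function of $\rho$. Throughout I work in a local orthonormal frame that is parallel at the point under consideration, and write $R^\nabla(e_i,e_j)=F_{ij}$, possibly with a factor $2$ from $\omega^i\wedge\omega^j$ that must be tracked.

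\textbf{Part (i).} The integrand is $-V_{k,i}\langle\nabla_{e_k}F_{ij},F_{lj}\rangle V_l$. Apply Bianchi to $\nabla_{e_k}F_{ij}$ to get $-\nabla_{e_i}F_{jk}-\nabla_{e_j}F_{ki}$.
\begin{itemize}
\item The term $V_{k,i}V_l\langle\nabla_{e_j}F_{ki},F_{lj}\rangle$ vanishes by antisymmetry in $(k,i)$ against the symmetric $V_{k,i}$.
\item The remaining term is $V_{k,i}V_l\langle\nabla_{e_i}F_{jk},F_{lj}\rangle$. Integrate by parts in $e_i$. The derivative then falls either on $V_{k,i}$, producing $V_{k,ii}$, or on $V_l$, producing $V_{l,i}V_{k,i}$, or on $F_{lj}$.
\item Commute $V_{k,ii}$ to $V_{i,ik}$ plus a Ricci term, using $V_{k,ii}=V_{i,ki}$ and the commutator formula. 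This is the source of $V_kV_lR_{ik}$.
\item Treat the term with $\nabla_{e_i}F_{lj}$ by Bianchi again, and use $\nabla_{e_i}F_{ij}=0$.
\end{itemize}
The term $\langle\nabla_{e_l}F_{ij},F_{ij}\rangle=\frac12 e_l|R^\nabla|^2$ then appears. Integrating it by parts against $V_lV_{k,k}$ produces the $\frac12|R^\nabla|^2(V_{k,kl}V_l+V_{k,k}V_{l,l})$ terms.

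\textbf{Part (ii).} The integrand is $-\langle\nabla_{e_i}(V_{k,i}F_{kj}),F_{lj}\rangle V_l+V_{k,ii}\langle F_{kj},F_{lj}\rangle V_l$. Integrating the first term by parts moves $\nabla_{e_i}$ onto $V_lF_{lj}$. The second term is rewritten via $V_{k,ii}=V_{i,ik}+\sum V_s R\cdots$. From this point the bookkeeping follows the same reductions as in (i) and yields the identical right-hand side; the paper's claim that (i) and (ii) coincide serves as a consistency check.

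\textbf{Part (iii).} Use $D^*DV=-V_{k,ii}e_k$ and the commutation $V_{k,ii}=V_{i,ki}+V_sR_{\cdot}$, which gives the Ricci term $-V_iV_lR_{ik}\langle F_{kj},F_{lj}\rangle$. Then integrate $-V_{i,ik}V_l\langle F_{kj},F_{lj}\rangle$ by parts in $e_k$. The derivative lands on three places:
\begin{itemize}
\item on $V_l$, giving $V_{i,i}V_{l,k}\langle F_{kj},F_{lj}\rangle$;
\item on $F_{kj}$, where Yang--Mills gives $\nabla_{e_k}F_{kj}=0$;
\item on $F_{lj}$, where Bianchi converts $\langle F_{kj},\nabla_{e_k}F_{lj}\rangle$ into $\frac12 e_l|R^\nabla|^2$.
\end{itemize}
A final integration by parts of the last piece gives the $|R^\nabla|^2$ terms.

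\textbf{Part (iv).} Write $D^2_{e_i,e_j}V=V_{k,ji}e_k$, so the integrand is $V_{k,ji}V_l\langle F_{ki},F_{lj}\rangle$. Split $V_{k,ji}$ into its part symmetric in $(k,i)$ plus $\frac12$ of the antisymmetrization. By the commutator formula the antisymmetrized part equals $V_sR_{\cdot}$. The symmetric part pairs with the antisymmetric $F_{ki}$ and therefore vanishes. What survives is exactly $-\frac12V_sV_lR_{jsik}\langle F_{ki},F_{jl}\rangle$.

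\textbf{Main obstacle.} The hardest part is the bookkeeping in (i) and (ii). Several Bianchi substitutions and two integrations by parts must be combined so that terms with $\nabla F$ cancel completely. The difficulty is keeping signs and the factor conventions ($\frac12$ from $|\cdot|^2$ on $2$-forms, and the curvature sign convention $R_{jkli}$) consistent. I would first verify the $|R^\nabla|^2$ and Ricci coefficients on the model case of the round sphere, where $V_{i,j}=\cos\rho\,\delta_{ij}$, to fix the signs before doing the general computation.
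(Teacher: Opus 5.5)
Your parts (i)--(iii) follow the paper's route. You integrate by parts off the factor $R^\nabla(V_y,e_j)$, kill $\nabla_{e_i}F_{ij}$ with the Yang--Mills equation, use Bianchi to produce $\frac12 e_l|R^\nabla|^2$, and get the Ricci term from the commutator. Your (iii) is exactly the paper's computation (\ref{V_kkiV_l}).

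\textbf{Part (iv).} This is genuinely different and simpler. The paper integrates $V_{k,ji}V_l\langle F_{ki},F_{lj}\rangle$ by parts, uses Yang--Mills, and then feeds in (\ref{V_kiV_l}). That identity itself needs a second integration by parts plus the ``term reappears with opposite sign'' absorption. You observe instead that $V_{k,ji}-V_{i,jk}=V_{j,ki}-V_{j,ik}$ (by $V_{k,j}=V_{j,k}$) is a pure commutator, so only this part survives against the antisymmetric $F_{ki}$. With the paper's convention $V_{k,ij}-V_{k,ji}=V_sR_{ksji}$, this gives $\frac12V_sV_lR_{jsik}\langle F_{ki},F_{lj}\rangle=-\frac12V_sV_lR_{jsik}\langle F_{ki},F_{jl}\rangle$. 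This holds pointwise, with no integration and no Yang--Mills equation. The same observation also replaces the absorption argument in (i). After integrating $-V_{k,i}V_l\langle F_{ij},F_{kl,j}\rangle$ by parts in $e_j$ and using Yang--Mills, the term $V_{k,ij}V_l\langle F_{ij},F_{kl}\rangle$ equals $\frac12V_sV_lR_{ksji}\langle F_{ij},F_{kl}\rangle$ pointwise.

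\textbf{Corrections.}

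\emph{The Bianchi step in (i).} It is only a relabelling. By symmetry of $V_{k,i}$, the integrand of (i) equals $-V_{k,i}V_l\langle F_{kj,i},F_{lj}\rangle$, which is already the integrand of (ii). So (i) and (ii) agree pointwise; this is the paper's whole proof of (ii).

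\emph{Missing bookkeeping in (i).} Your sketch does not yet produce the $\frac12$-coefficients. Bianchi on $\langle F_{kj},F_{lj,i}\rangle$ gives two pieces. The first piece is $-V_{k,i}V_l\langle F_{kj},F_{ji,l}\rangle=\frac12V_{k,i}V_l\,e_l\langle F_{kj},F_{ij}\rangle$, not $\frac12e_l|R^\nabla|^2$. Integrating it by parts gives $\frac12V_{k,il}V_l\langle F_{ij},F_{jk}\rangle$. It also gives the extra $-\frac12V_{k,k}V_{l,i}\langle F_{ij},F_{lj}\rangle$ that turns the coefficient $-1$ into $-\frac32$. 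The second piece is handled as described above. The $\frac12e_l|R^\nabla|^2$ term arises only from the $V_{i,ik}$ term, as in (iii).

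\emph{Boundary terms.} Your justification for their absence is wrong as stated. First, $f_\pi$ is only $C^{1,1}$: $f''$ jumps from $-1$ to $0$ at $\pi$. Second, $DV_y$ does not vanish where $\rho_y=\pi$; its radial eigenvalue there is $\cos\pi=-1$. The boundary terms vanish for a different reason. Every flux carries an undifferentiated factor $V_y$ coming from $R^\nabla(V_y,e_j)$, and $V_y=\sin\rho_y\operatorname{grad}\rho_y$ does vanish there.

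\emph{The point $y$.} Your treatment is correct: $-\cos\rho_y$ is smooth as a function of $\rho_y^2$. It is cleaner than the paper's $|V_{i,j}|\le C\rho_y^{-1}$ flux estimate.
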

		\begin{proof}(i)\ 
			We will perform pointwise computations on $B_\pi(y)\backslash\{y\}$. Using Bianchi identity, we have
			\begin{align*}
				&-\langle\nabla_{D_{e_i}V_y}R^\triangledown(e_i,e_j),R^\triangledown(V_y,e_j)\rangle\\
				=&-V_{k,i}V_l\langle F_{ij,k},F_{lj}\rangle\\
				=&-\operatorname{div}(g(D_{e_i}V_y,\cdot)V_l\langle F_{ij},F_{lj}\rangle)+V_{k,ik}V_l\langle F_{ij},F_{lj}\rangle+V_{k,i}V_{l,k}\langle F_{ij},F_{lj}\rangle+V_{k,i}V_l\langle F_{ij},F_{lj,k}\rangle\\
				=&-\operatorname{div}(g(D_{e_i}V_y,\cdot)V_l\langle F_{ij},F_{lj}\rangle)+V_{k,ki}V_l\langle F_{ij},F_{lj}\rangle+V_kV_lR_{ik}\langle F_{ij},F_{lj}\rangle+V_{k,i}V_{l,k}\langle F_{ij},F_{lj}\rangle\\
				&-V_{k,i}V_l\langle F_{ij},F_{jk,l}\rangle-V_{k,i}V_l\langle F_{ij},F_{kl,j}\rangle.
			\end{align*}
			Using the Yang-Mills equation $\sum_{i=1}^nF_{ij,i}=0$, we have
			\begin{equation}\label{V_kkiV_l}
				\begin{aligned}[b]
					&V_{k,ki}V_l\langle F_{ij},F_{lj}\rangle\\
					=&\operatorname{div}(V_{k,k}\langle R^\triangledown(\cdot,e_j),R^\triangledown(V_y,e_j)\rangle)-V_{k,k}V_{l,i}\langle F_{ij},F_{lj}\rangle-V_{k,k}V_l\langle F_{ij},F_{lj,i}\rangle\\
					=&\operatorname{div}(V_{k,k}\langle R^\triangledown(\cdot,e_j),R^\triangledown(V_y,e_j)\rangle)-\frac12V_{k,k}V_le_l(|R^\triangledown|^2)-V_{k,k}V_{l,i}\langle F_{ij},F_{lj}\rangle\\
					=&\operatorname{div}(V_{k,k}\langle R^\triangledown(\cdot,e_j),R^\triangledown(V_y,e_j)\rangle)-\frac12V_{k,k}g(V,\cdot)|R^\triangledown|^2)+\frac12|R^\triangledown|^2(V_{k,kl}V_l+V_{k,k}V_{l,l})-V_{k,k}V_{l,i}\langle F_{ij},F_{lj}\rangle,
				\end{aligned}
			\end{equation}
			where we use $\langle F_{ij},F_{lj,i}\rangle=-\langle F_{ij},F_{ji,l}\rangle-\langle F_{ij},F_{il,j}\rangle$ and thus $\langle F_{ij},F_{lj,i}\rangle=\frac12\langle F_{ij},F_{ij,l}\rangle=\frac12e_l(|R^\triangledown|^2)$.
			
			Since
			\begin{align*}
				&-V_{k,i}V_l\langle F_{ij},F_{jk,l}\rangle\\
				=&-\operatorname{div}(V_{k,i}g(V_y,\cdot)\langle F_{ij},F_{jk,l}\rangle)+V_{k,il}V_l\langle F_{ij},F_{jk}\rangle+V_{k,i}V_{l,l}\langle F_{ij},F_{jk}\rangle+V_{k,i}V_l\langle F_{ij,l},F_{jk}\rangle
			\end{align*}
			and note that the final term on the right-hand side is exactly the negative of the left-hand side, we have
			\begin{align*}
				-V_{k,i}V_l\langle F_{ij},F_{jk,l}\rangle=\frac12(-\operatorname{div}(V_{k,i}g(V_y,\cdot)\langle F_{ij},F_{jk,l}\rangle)+V_{k,il}V_l\langle F_{ij},F_{jk}\rangle+V_{k,i}V_{l,l}\langle F_{ij},F_{jk}\rangle).
			\end{align*}
			Applying the Yang-Mills equation and the curvature commutator formula, we have
			\begin{align*}
				&-V_{k,i}V_l\langle F_{ij},F_{kl,j}\rangle\\
				=&-\operatorname{div}(V_{k,i}\langle R^\triangledown(e_i,\cdot),R^\triangledown(e_k,V_y)\rangle)+V_{k,ij}V_l\langle F_{ij},F_{kl}\rangle+V_{k,i}V_{l,j}\langle F_{ij},F_{kl}\rangle\\
				=&-\operatorname{div}(V_{k,i}\langle R^\triangledown(e_i,\cdot),R^\triangledown(e_k,V_y)\rangle)+V_{k,ji}V_l\langle F_{ij},F_{kl}\rangle+V_sV_lR_{ksji}\langle F_{ij},F_{kl}\rangle+V_{k,i}V_{l,j}\langle F_{ij},F_{kl}\rangle\\
				=&-2\operatorname{div}(V_{k,i}\langle R^\triangledown(e_i,\cdot),R^\triangledown(e_k,V_y)\rangle)+V_sV_lR_{ksji}\langle F_{ij},F_{kl}\rangle+2V_{k,i}V_{l,j}\langle F_{ij},F_{kl}\rangle-V_{k,j}V_l\langle F_{ij},F_{kl,i}\rangle.
			\end{align*}
			Using again that the last term on the RHS precisely equals minus the LHS expression, we obtain
			\begin{align}\label{V_kiV_l}
				-V_{k,i}V_l\langle F_{ij},F_{kl,j}\rangle=\frac12V_sV_lR_{ksji}\langle F_{ij},F_{kl}\rangle+V_{k,i}V_{l,j}\langle F_{ij},F_{kl}\rangle.
			\end{align}
			We may apply the Stokes theorem to divergence-type terms.In fact, the eigenvalue estimate (\ref{lambda_i}) for $\mathcal{Q}$ implies $|V_{i,j}| \le C\rho_y^{-1}$ on $B_{\pi}(y) \backslash\{y\}$. Using Stokes theorem, we have
			\begin{align*}
				&|\int_{B_\pi(y)\backslash B_r(y)}\operatorname{div}(V_{k,k}\langle R^\triangledown(\cdot,e_j),R^\triangledown(V_y,e_j)\rangle)dV|\\
				\le&\int_{\partial B_r(y)}|V_{k,k}\langle R^\triangledown(\nu,e_j),R^\triangledown(V_y,e_j)\rangle|d\Theta\\
				\le&C\int_{B_r(y)}r^{-1}d\Theta\\
				\le&Cr^{n-2}
			\end{align*}
			for any $0<r<\pi$, where $\nu$ is the unit outer normal vector of $\partial B_r(y)$. Thus we have
			\begin{align*}
				&\int_M\operatorname{div}(V_{k,k}\langle R^\triangledown(\cdot,e_j),R^\triangledown(V_y,e_j)\rangle)dV\\
				=&\lim_{r\to0}\int_{B_\pi(y)\backslash B_r(y)}\operatorname{div}(V_{k,k}\langle R^\triangledown(\cdot,e_j),R^\triangledown(V_y,e_j)\rangle)dV\\
				=&0.
			\end{align*}
			Applying the same reasoning to the remaining divergence-form terms, we conclude that their integrals over $M$ vanish. Synthesizing the preceding computations and integration over $M$ yields
			\begin{align*}
				&-\int_M\langle\nabla_{D_{e_i}V_y}R^\triangledown(e	_i,e_j),R^\triangledown(V_y,e_j)\rangle dV\\
				=&\int_M\frac12|R^\triangledown|^2(V_{k,kl}V_l+V_{k,k}V_{l,l})-\frac32V_{k,k}V_{l,i}\langle F_{ij},F_{lj}\rangle+V_kV_lR_{ik}\langle F_{ij},F_{lj}\rangle+V_{k,i}V_{l,k}\langle F_{ij},F_{lj}\rangle\\
				&+\frac12V_{k,il}V_l\langle F_{ij},F_{jk}\rangle+\frac12V_sV_lR_{ksji}\langle F_{ij},F_{kl}\rangle+V_{k,i}V_{l,j}\langle F_{ij},F_{kl}\rangle dV.
			\end{align*}
			(ii)\ Using $V_{i,j}=V_{j,i}$, it is straightforward to verify that
			\begin{align*}
				-\nabla_{e_i}R^\triangledown(D_{e_i}V_y,e_j)=-V_{k,i}\nabla_{e_i}R^\triangledown(e_k,e_j)=-\nabla_{D_{e_k}V_y}R^\triangledown(e_k,e_j),
			\end{align*}
			thus the result in (ii) is identical to that in (i).\\
			(iii)\ Using the curvature commutator formula, we have
			\begin{align*}
				\langle R^\triangledown(D^\ast DV_y,e_j),R^\triangledown(V_y,e_j)\rangle=-V_{k,ii}V_l\langle F_{kj},F_{lj}\rangle=-V_{i,ik}V_l\langle F_{kj},F_{lj}\rangle-V_iV_lR_{ik}\langle F_{kj},F_{lj}\rangle.
			\end{align*}
			Using (\ref{V_kkiV_l}), we can process the first term and we obtain
			\begin{align*}
				\int_M\langle R^\triangledown(D^\ast DV_y,e_j),R^\triangledown(V_y,e_j)\rangle dV=\int_MV_{i,i}V_{l,k}\langle F_{kj},F_{lj}\rangle-\frac12|R^\triangledown|^2(V_{i,il}V_l+V_{i,i}V_{l,l})-V_iV_lR_{ik}\langle F_{kj},F_{lj}\rangle dV.
			\end{align*}
			(iv)\ Applying Yang-Mills equation, we have
			\begin{align*}
				&\langle R^\triangledown(D^2_{e_i,e_j}V_y,e_i),R^\triangledown(V_y,e_j)\rangle\\
				=&V_{k,ji}V_l\langle F_{ki},F_{lj}\rangle\\
				=&\operatorname{div}(V_{k,j}\langle R^\triangledown(e_k,\cdot),R^\triangledown(V_y,e_j)\rangle)-V_{k,j}V_{l,i}\langle F_{ki},F_{lj}\rangle-V_{k,j}V_l\langle F_{ki},F_{lj,i}\rangle.
			\end{align*}
			Employing (\ref{V_kiV_l}), we compute the ultimate term to obtain
			\begin{align*}
				\int_M\langle R^\triangledown(D^2_{e_i,e_j}V_y,e_i),R^\triangledown(V_y,e_j)\rangle dV=-\frac12\int_MV_sV_lR_{jsik}\langle F_{ki},F_{jl}\rangle dV.
			\end{align*}
		\end{proof}
		Lemma \ref{second variation} and lemma \ref{2var-D^2V} directly establish the following proposition.
		\begin{proposition}
			For the vector field $V_y$ defined in (\ref{V_y}), the second variation along $i_{V_y}R^\nabla$ reads
			\begin{equation}\label{2nd var locally}
				\begin{split}
					\mathscr{L}(i_{V_y}R^\triangledown)=\int_M&\frac12|R^\triangledown|^2(V_{i,il}V_l+V_{i,i}V_{,ll})+V_sV_lR_{kjis}\langle F_{ik},F_{lj}\rangle+\frac12V_sV_lR_{ksji}\langle F_{ij},F_{kl}\rangle-2V_{k,k}V_{l,i}\langle F_{ij},F_{lj}\rangle\\
					&+V_{k,il}V_l\langle F_{ij},F_{jk}\rangle+2V_{k,i}V_{l,k}\langle F_{ij},F_{lj}\rangle+2V_{k,i}V_{l,j}\langle F_{ij},F_{kl}\rangle dV.
				\end{split}
			\end{equation}
		\end{proposition}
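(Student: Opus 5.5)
The plan is to substitute the four identities of Lemma \ref{2var-D^2V} into the integrand of Lemma \ref{second variation} with $V=V_y$, write everything in the frame components $F_{ij}$, $V_i$, $V_{i,j}$, $V_{i,jk}$ and $R_{ijkl}$, and then collect like terms. The first two terms of Lemma \ref{second variation} contain no derivatives of $V_y$. In components they read
\[
V_l\langle R(e_i,V)e_i\ \text{and}\ R(e_i,V)e_j \text{ contracted with } F\rangle,
\]
which will produce the terms $V_sV_lR_{kjis}\langle F_{ik},F_{lj}\rangle$ together with Ricci-type terms $V_kV_lR_{ik}\langle F_{ij},F_{lj}\rangle$. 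I will expand these first, taking care with the convention $R_{jkli}=\langle R(e_l,e_i)e_k,e_j\rangle$ and with the factor coming from $R^\nabla=\sum F_{ij}\omega^i\wedge\omega^j$.

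Next I add the contributions (i)--(iv) of Lemma \ref{2var-D^2V}. Parts (i) and (ii) are identical, so doubling them gives the following terms:
\begin{itemize}
\item $|R^\nabla|^2(V_{k,kl}V_l+V_{k,k}V_{l,l})$;
\item $-3V_{k,k}V_{l,i}\langle F_{ij},F_{lj}\rangle$;
\item $2V_kV_lR_{ik}\langle F_{ij},F_{lj}\rangle$;
\item $2V_{k,i}V_{l,k}\langle F_{ij},F_{lj}\rangle$;
\item $V_{k,il}V_l\langle F_{ij},F_{jk}\rangle$;
\item $V_sV_lR_{ksji}\langle F_{ij},F_{kl}\rangle$;
\item $2V_{k,i}V_{l,j}\langle F_{ij},F_{kl}\rangle$.
\end{itemize}

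Adding part (iii) then has three effects:
\begin{itemize}
\item It reduces the $|R^\nabla|^2$ coefficient to $\tfrac12$.
\item It adds $+V_{i,i}V_{l,k}\langle F_{kj},F_{lj}\rangle$, which combines with $-3$ to give $-2V_{k,k}V_{l,i}\langle F_{ij},F_{lj}\rangle$.
\item It subtracts one Ricci term, leaving $V_kV_lR_{ik}\langle F_{ij},F_{lj}\rangle$.
\end{itemize}

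Part (iv) contributes $-\tfrac12V_sV_lR_{jsik}\langle F_{ki},F_{jl}\rangle$. After relabelling indices, this cancels half of the $V_sV_lR_{ksji}\langle F_{ij},F_{kl}\rangle$ term, which leaves the coefficient $\tfrac12$ stated in (\ref{2nd var locally}).

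The remaining task is bookkeeping of the curvature terms. The surviving Ricci term from (i)--(iii) must combine with the Ricci-type piece arising from $R^\nabla(R(e_i,V)e_i,e_j)$ in Lemma \ref{second variation}. There, $\sum_i R(e_i,V)e_i=-\operatorname{Ric}(V)$, so the term should be $-V_sV_lR_{ks}\langle F_{kj},F_{lj}\rangle$, and the two should cancel. The other zeroth-order term, $R^\nabla(e_i,R(e_i,V)e_j)$, gives $V_sV_lR_{kjis}\langle F_{ik},F_{lj}\rangle$. I expect the main obstacle to be these sign and index conventions: verifying the cancellation of the Ricci terms, and matching $R_{jsik}\langle F_{ki},F_{jl}\rangle$ against $R_{ksji}\langle F_{ij},F_{kl}\rangle$. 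For the latter I would use the antisymmetry $F_{ij}=-F_{ji}$ together with the symmetries $R_{abcd}=-R_{abdc}=R_{cdab}$.

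Finally I would note that all integrals are well defined and that each divergence term has already been discarded in Lemma \ref{2var-D^2V} via the $|V_{i,j}|\le C\rho_y^{-1}$ bound. This concludes the proposition.
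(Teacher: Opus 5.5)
Your proposal is correct and follows the paper's own route. The paper proves the proposition in one line, by inserting parts (i)--(iv) of Lemma \ref{2var-D^2V} into Lemma \ref{second variation} and collecting terms. Your bookkeeping fills in the steps the paper leaves implicit: the $|R^\nabla|^2$ coefficient $1-\tfrac12$, the coefficient $-3+1=-2$, the relabelling of part (iv) using $F_{ij}=-F_{ji}$ and $R_{abcd}=-R_{abdc}$, and the cancellation of the surviving $+\operatorname{Ric}$ term against $\sum_i R(e_i,V)e_i=-\operatorname{Ric}(V)$.
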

		\section{Proof of the main result}
		In this section, we will proof the theorem \ref{main theorem}. We first need the estimate of the curvature tensors.
		\begin{proposition}[\cite{BM} Section 6]\label{estiamte of curvature tensor}
			Assume the sectional curvature $K\in(\delta,1]$, then the curvature tensor satisfies
			\begin{equation}
				\begin{split}
					&R_{ijij}\in(\delta,1],\\
					&R_{ijil}\in(-\frac12(1-\delta),\frac12(1-\delta)),\\
					&R_{ijkl}\in(-\frac23(1-\delta),\frac23(1-\delta)),
				\end{split}
			\end{equation}
			where $i,j,k,l$ are not equal to each other.
		\end{proposition}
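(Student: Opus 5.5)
The statement is pointwise and purely algebraic, so I will work at a fixed point in an orthonormal frame $\{e_1,\dots,e_n\}$. The plan is to express every component $R_{ijkl}$ through sectional curvatures $K(\cdot,\cdot)$ of suitably chosen $2$-planes. Once that is done, the constraints $\delta<K\le1$ give the bounds directly. Write $H:=\tfrac12(1+\delta)$, $\varepsilon:=\tfrac12(1-\delta)$, and set $R_0(X,Y,Z,W)=\langle X,W\rangle\langle Y,Z\rangle-\langle X,Z\rangle\langle Y,W\rangle$ for the curvature tensor of constant curvature $1$. Consider $T:=R-H R_0$. This is an algebraic curvature tensor whose sectional curvatures lie in $(-\varepsilon,\varepsilon]$, so $|T(X,Y,Y,X)|\le\varepsilon\,|X\wedge Y|^2$.

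\textbf{Diagonal components.} $R_{ijij}=K(e_i,e_j)\in(\delta,1]$ for $i\neq j$ holds by definition, up to the sign convention $R_{jkli}=\langle R(e_l,e_i)e_k,e_j\rangle$ used in the paper, which I will check so that $R_{ijij}$ is a sectional curvature.

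\textbf{Components with one repeated index.} For distinct $i,j,l$, $R_0$ contributes nothing to $R_{ijil}$, so $R_{ijil}=T(e_j,e_i,e_i,e_l)$ up to sign. I will polarize in the symmetric bilinear form $(X,Y)\mapsto T(X,e_i,e_i,Y)$:
\[
4\,T(e_j,e_i,e_i,e_l)=T(e_j+e_l,e_i,e_i,e_j+e_l)-T(e_j-e_l,e_i,e_i,e_j-e_l).
\]
Both vectors $e_j\pm e_l$ are orthogonal to $e_i$ and have squared norm $2$. Each term therefore has absolute value at most $2\varepsilon$, with the upper sectional bound not strict. This gives $|R_{ijil}|\le \varepsilon=\tfrac12(1-\delta)$. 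For strict inequality I will use the fact that the lower bound $K>\delta$ is strict: one of the two terms involves $-\varepsilon$ strictly, and by compactness of the unit sphere in $\wedge^2$ at a point we may replace $\delta$ with a slightly larger $\delta'$. Either way this gives the open interval stated.

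\textbf{All indices distinct.} This is the main obstacle, because naive polarization of the full tensor gives a worse constant. The plan is the classical Berger argument (as in \cite{BM}). First I will express $6\,T(X,Y,Z,W)$ as a combination of sectional-type terms $T(X+W,Y+Z,Y+Z,X+W)-T(X+W,Y,Y,X+W)-\dots$, that is, the standard formula recovering a curvature tensor from its biquadratic form using the first Bianchi identity. Then I will group the terms into two brackets of the form $\mathcal{K}(Y+Z,X+W)-\mathcal{K}(Y-Z,X+W)$ and similar, with $\mathcal K(A,B):=T(A,B,B,A)$, and bound each difference by $\varepsilon(|A|^2|B|^2+\dots)$. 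This is the delicate bookkeeping. Evaluating at orthonormal $e_i,e_j,e_k,e_l$, the norms $|e_a\pm e_b|^2=2$ yield $6|T_{ijkl}|\le 4\cdot\ldots$, producing the constant $\tfrac43\varepsilon=\tfrac23(1-\delta)$. As in the previous case, the strictness comes from the strict lower bound combined with compactness. Since the result is cited from \cite{BM}, the write-up can state this step and refer to it, mainly checking that the paper's index convention matches.
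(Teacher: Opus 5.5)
Your proof is correct. It is the classical Berger polarization argument, applied to the shifted tensor $T=R-\tfrac{1+\delta}{2}R_0$ whose sectional curvatures lie in $(-\varepsilon,\varepsilon]$, and this is the result the paper imports from \cite{BM} without giving a proof. For the all-distinct case, your bookkeeping works cleanly through the Bianchi identity $6T(X,Y,Z,W)=[T(X,Y+Z,Y+Z,W)-T(X,Y-Z,Y-Z,W)]-[T(Y,X+Z,X+Z,W)-T(Y,X-Z,X-Z,W)]$: your one-repeated-index estimate bounds each of the four terms strictly by $\varepsilon|e_a\pm e_b|^2=2\varepsilon$ in absolute value, so $|R_{ijkl}|<\tfrac43\varepsilon=\tfrac23(1-\delta)$, with strictness automatic and no compactness needed.
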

		We have defined a family of variation directions $\{i_{V_y}R^\triangledown\mid y\in M\}$, along which we can estimate the second variations.
		\begin{proposition}\label{int L(iV_yR)}
			For any $y\in M$, let $V_y$ be the vector field defined as (\ref{V_y}). Under the assumption that $\operatorname{inj}(M)=\pi$ and the sectional curvature $K\in(\delta,1]$, we have
			\begin{align}
				\int_M\mathscr{L}(i_{V_y}R^\triangledown)dV(y)\le\int_M|R^\triangledown|^2(x)\int_{B_\pi(x)}\Phi_\delta(\rho_x(y))dV(y)dV(x),
			\end{align}
			where
			\begin{align*}
				\Phi_\delta(\rho)=&\frac12+((\frac2n+n)(1-\delta)-\frac{1-(n-1)\delta}2)\sin^2\rho+4f_2+(\frac{(n-1)^2}2+6)g_2\\
				&+\frac32(n-1)h_2-4nf_1-\frac{n-1}2g_1-2h_1.
			\end{align*}
		\end{proposition}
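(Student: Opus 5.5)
The plan is to start from the local formula (\ref{2nd var locally}) for $\mathscr{L}(i_{V_y}R^\nabla)$ and estimate it pointwise at each $x\in B_\pi(y)\backslash\{y\}$ in the eigenframe $e_1=\operatorname{grad}\rho_y$, $e_2,\dots,e_n$ of $\mathcal{Q}_x$. In this frame $V_{i,j}=\lambda_i\delta_{ij}$ and $V_l=\sin\rho\,\delta_{l1}$. The goal is a bound
\[
\mathscr{L}\text{-integrand}(x)\le |R^\nabla|^2(x)\,\Phi_\delta(\rho_y(x)).
\]
Once this holds, I integrate over $y\in M$ and apply Fubini. Since $\rho_y(x)=\rho_x(y)$ and $\operatorname{supp}V_y\subset\overline{B_\pi}(y)$, the order of integration can be swapped to give the stated double integral.

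\textbf{Step 1: second-derivative terms.} These are the terms $\frac12|R^\nabla|^2V_{i,il}V_l$ and $V_{k,il}V_l\langle F_{ij},F_{jk}\rangle$.
\begin{itemize}
\item Contract (\ref{D^2V}) using $V_{i,jl}=V_{j,il}$. Only $l=1$ contributes, so $V_{k,i1}\sin\rho$ is expressed through $-V_kV_i+\cos\rho\,V_{k,i}-V_{m,k}V_{m,i}$ and curvature terms $V_mV_sR_{\cdot\cdot\cdot\cdot}$.
\item In the eigenframe this gives diagonal quantities $-\sin^2\rho\,\delta_{k1}\delta_{i1}+\cos\rho\,\lambda_i\delta_{ki}-\lambda_i^2\delta_{ki}$, plus $\sin^2\rho\,R_{k1i1}$.
\item Then $\frac12|R^\nabla|^2V_{i,il}V_l$ becomes $\frac12|R^\nabla|^2\big(\sum_i(\cos\rho\lambda_i-\lambda_i^2)-\sin^2\rho\operatorname{Ric}(e_1,e_1)-\sin^2\rho\big)$, and similarly for the term $V_{k,il}V_l\langle F_{ij},F_{jk}\rangle$.
\item Together with $\frac12|R^\nabla|^2V_{i,i}V_{l,l}=\frac12|R^\nabla|^2(\sum\lambda_i)^2$, everything is reduced to products $\lambda_i\lambda_j$, $\lambda_i^2$, $\cos\rho\,\lambda_i$, $\sin^2\rho$ times curvature, all weighted by $|F_{ij}|^2$-type quantities.
\end{itemize}

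\textbf{Step 2: first-order and curvature terms.}
\begin{itemize}
\item The first-order terms $-2V_{k,k}V_{l,i}\langle F_{ij},F_{lj}\rangle$, $2V_{k,i}V_{l,k}\langle F_{ij},F_{lj}\rangle$ and $2V_{k,i}V_{l,j}\langle F_{ij},F_{kl}\rangle$ become $\sum_{i,j}c_{ij}(\lambda)|F_{ij}|^2$. The coefficients are sums of products of eigenvalues.
\item Each coefficient is bounded by Lemma \ref{contolled by fgh}: products $\lambda_i\lambda_j$ with $i,j\ge2$ via $f_1,f_2$, $\lambda_i^2$ via $g_1,g_2$, and $\cos\rho\,\lambda_i$ via $h_1,h_2$. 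A positive sign takes the upper function and a negative sign the lower one, which produces the pattern $+f_2,+g_2,+h_2,-f_1,-g_1,-h_1$ in $\Phi_\delta$.
\item The curvature terms $V_sV_lR_{kjis}\langle F_{ik},F_{lj}\rangle$ and $\frac12V_sV_lR_{ksji}\langle\cdot,\cdot\rangle$ carry a factor $\sin^2\rho$. I split them into diagonal components $R_{1j1j}\in(\delta,1]$, which give the $-\frac{1-(n-1)\delta}{2}$-type contribution, and off-diagonal components. The off-diagonal ones are bounded by Proposition \ref{estiamte of curvature tensor} together with Cauchy–Schwarz, $|\langle F_{ik},F_{lj}\rangle|\le\frac12(|F_{ik}|^2+|F_{lj}|^2)$, and count how many index patterns occur; this gives the $(\frac2n+n)(1-\delta)$ term.
\end{itemize}

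\textbf{Step 3: collect and bound.} Summing $|F_{ij}|^2$ with coefficients bounded uniformly by the worst case, and using $\sum_{i<j}|F_{ij}|^2=|R^\nabla|^2$, each coefficient is at most $\Phi_\delta$. The remaining constant $\frac12$ in $\Phi_\delta$ should come from the $-V_kV_i$ term in (\ref{D^2V}) after combining with the other $\sin^2\rho$ terms. Near $y$, integrability follows from $|V_{i,j}|\le C\rho^{-1}$, exactly as in the Stokes argument of Lemma \ref{2var-D^2V}.

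The main obstacle is the bookkeeping in Steps 1–2. The quantities $\langle F_{ij},F_{lj}\rangle$ and the curvature contractions must be organized so that every term becomes a combination of $|F_{1j}|^2$ and $|F_{ij}|^2$ ($i,j\ge2$) with sign-definite coefficients. Only then can the one-sided bounds of Lemma \ref{contolled by fgh} be applied, and the index counting (giving the factors $n$, $(n-1)^2/2$, $4n$, etc.) must be done carefully. Rather than optimize, I would crudely bound every coefficient by a single common upper bound, accepting a non-sharp $\Phi_\delta$.
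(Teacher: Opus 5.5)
The paper's approach is the same as yours (pointwise estimates in the eigenframe, the bounds of Lemma \ref{contolled by fgh}, Fubini), but your outline does not reach the stated $\Phi_\delta$. Two of your mechanisms differ from the paper's, and one step of the paper's own proof has no counterpart in your scheme.

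\textbf{(a) The explicit curvature terms cancel.} The terms $V_sV_lR_{kjis}\langle F_{ik},F_{lj}\rangle+\frac12V_sV_lR_{ksji}\langle F_{ij},F_{kl}\rangle$ vanish identically. With $V_s=\sin\rho\,\delta_{s1}$, the coefficient of $\langle F_{ik},F_{1j}\rangle$, antisymmetrized in $(i,k)$, is zero by the first Bianchi identity; this is (\ref{part7}). Even their sectional-curvature parts cancel: the first gives $\sin^2\rho\sum_jK_{1j}|F_{1j}|^2$ and the second gives $-\sin^2\rho\sum_jK_{1j}|F_{1j}|^2$. So they cannot produce any $\delta$-dependent $\sin^2\rho$ term. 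Bounding their off-diagonal entries via Proposition \ref{estiamte of curvature tensor} and Cauchy--Schwarz would only add a spurious positive multiple of $(1-\delta)\sin^2\rho|R^\nabla|^2$. Those terms actually come from the curvature part $-\sum_{i,k}V_iV_kR_{jkli}$ of (\ref{D^2V}) inside the two second-derivative terms. There it appears as $-\frac12\sin^2\rho\operatorname{Ric}(e_1,e_1)|R^\nabla|^2$ and as $-\sin^2\rho\sum R_{k1i1}\langle F_{ij},F_{jk}\rangle$.

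\textbf{(b) The factor $\frac2n$ is not an index count.} No pointwise estimate can produce it. After estimating the last term, the paper is left with $(1-\delta)\sin^2\rho\sum_j|F_{1j}|^2$, where $e_1=\operatorname{grad}\rho_y(x)$. Pointwise, $\sum_j|F_{1j}|^2$ can equal $|R^\nabla|^2(x)$, for instance when $R^\nabla(x)$ lives on planes containing $e_1$. The paper therefore integrates in $y$ first. As $y$ runs over $B_\pi(x)$, $e_1$ sweeps $U_xM\cong S^{n-1}$. Since $w\mapsto|i_wR^\nabla|^2$ is a quadratic form of trace $2|R^\nabla|^2$, one gets $\int_{S^{n-1}}|i_wR^\nabla|^2dw=\frac2n|S^{n-1}||R^\nabla|^2(x)$. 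Relative to the paper, this averaging is the idea your ``pointwise bound, then Fubini'' scheme lacks (but see below).

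\textbf{(c) A non-sharp bound is a different statement.} Settling for a non-sharp $\Phi_\delta$ proves a different inequality, and the precise $\Phi_\delta$ is what defines $\delta(n)$.

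Do not treat the printed constants as a reliable target, however. The paper's pieces add up to the $\Psi$ used afterwards: $\frac12\cos^2\rho$ from $\lambda_1^2$ plus $\bigl(\frac12-\frac{(n-1)\delta}2+(n+\frac2n)(1-\delta)\bigr)\sin^2\rho$. They do not give the printed $-\frac{1-(n-1)\delta}2$.

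Moreover, the paper's bound $-\sin^2\rho R_{k1i1}\langle F_{ij},F_{jk}\rangle\le(1+n(1-\delta))\sin^2\rho|R^\nabla|^2-\delta\sin^2\rho\sum_j|F_{1j}|^2$ loses a term. Its diagonal part contributes $2|R^\nabla|^2$, and only $(1-\delta)|R^\nabla|^2$ is subtracted from it, so the leading $1$ should be $1+\delta$. On the round sphere the left side equals $\sin^2\rho\,(2|R^\nabla|^2-\sum_j|F_{1j}|^2)$. This violates the bound wherever $R^\nabla\neq0$ once $\delta>\frac n{n+1}$. Every other estimate becomes an equality there as $\delta\to1$. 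Hence the inequality with the $\Psi$-based $\Phi_\delta$ fails on the round sphere for $\delta$ near $1$, for any non-flat Yang--Mills connection.

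This term is best handled pointwise. For each $j$, $\sum_{i,k}R_{k1i1}\langle F_{ij},F_{kj}\rangle$ is the Jacobi form $\langle R(X,e_1)e_1,X\rangle$ evaluated on $X=\sum_iF_{ij}e_i$, componentwise in $\mathfrak g$. It is therefore at most $\sum_{i\ge2}|F_{ij}|^2$. This gives $V_{k,il}V_l\langle F_{ij},F_{jk}\rangle\le(2\sin^2\rho-2h_1+2g_2)|R^\nabla|^2$ with no averaging. It also avoids the paper's tacit assumption that the polar volume density about $x$ is independent of direction.

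Granting (\ref{2nd var locally}), combine this bound with (\ref{part7}) and the paper's bounds for the remaining terms. Then your pointwise-plus-Fubini scheme works. It yields $\frac12+\bigl(1-\frac{(n-1)\delta}2\bigr)\sin^2\rho$ plus the same $f,g,h$ terms, and this lies below $\Phi_\delta$ as printed.
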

		\begin{proof}
			For any $x\in B_\pi(y)\backslash\{y\}$, we will estimate the second variation (\ref{2nd var locally}) term by term at $x$. Using (\ref{D^2V}) , Lemma \ref{contolled by fgh} and Proposition \ref{estiamte of curvature tensor}, we have
			\begin{align*}
				\int_M\frac12|R^\triangledown|^2V_{i,il}V_ldV&=\int_{B_\pi(y)}\frac12|R^\triangledown|^2(-\sin^2\rho+\cos\rho\sum_i\lambda_i-\sum_i\lambda_i^2-\sin^2\rho\sum_iK_{i1})dV\\
				&\le\int_{B_\pi(y)}\frac12|R^\triangledown|^2((n-1)h_2-(n-1)g_1-(1+(n-1)\delta)\sin^2\rho)dV.
			\end{align*}
			Integrating with respect to $y$ and using Fubini's theorem, we have
			\begin{equation}\label{part1}
				\begin{split}
					&\int_M\int_M\frac12|R^\triangledown|^2V_{i,il}^yV_l^ydV(y)dV(x)\\
					\le&\int_M|R^\triangledown|^2(x)\int_{B_\pi(x)}\frac12((n-1)h_2(\rho_x(y))-(n-1)g_1(\rho_x(y))-(1+(n-1)\delta)\sin^2\rho_x(y))dV(y)dV(x).
				\end{split}
			\end{equation}
			Through parallel reasoning, we have
			\begin{align*}
				V_{k,il}V_l\langle F_{ij},F_{jk}\rangle&=\sin^2\rho\sum_i|F_{1i}|^2-\cos\rho\sum_{i,j}\lambda_i|F_{ij}|^2+\sum_{i,j}\lambda_i^2|F_{ij}|^2-\sin^2\rho R_{k1i1}\langle F_{ij},F_{jk}\rangle\\
				&\le(1-\delta)\sin^2\rho\sum_i|F_{1i}|^2+ ((1+n(1-\delta))\sin^2\rho-2h_1+2g_2)|R^\triangledown|^2,
			\end{align*}
			where the final inequality follows via the estimate:
			\begin{align*}
				&-\sin^2\rho R_{k1i1}\langle F_{ij},F_{jk}\rangle\\
				\le&\sin^2\rho(\sum_{i=2}^nK_{1i}|F_{ij}|^2+\frac{1-\delta}2\sum_{i\ne k}|\langle F_{ij},F_{jk}\rangle|)\\
				\le&\sin^2\rho(2|R^\triangledown|^2-\delta|F_{ij}|^2+\frac{1-\delta}2\sum_{i,k}|\langle F_{ij},F_{kj}\rangle|-(1-\delta)|R^\triangledown|^2)\\
				\le&(1+n(1-\delta))\sin^2\rho|R^\triangledown|^2-\delta\sin^2\rho|F_{1j}|^2.
			\end{align*}
			We need to estimate the integral of $\sin^2\rho\sum_i|F_{1i}|^2$. For any $x\in M$, we identify $U_xM=\{W\in T_xM\mid |W|=1\}$ canonically with $S^{n-1}$. Specifically, let $(x^1,...,x^n)$ be the local coordinate near $x$. For any $w=(w^1,...,w^n)\in S^{n-1}\subset\mathbb{R}^n$, let $W=\sum_{i=1}^nw^i\frac\partial{\partial x^i}\in U_xM$ be the corresponding tangent vector. For any $u,w\in S^{n-1}$, define a quadratic form $\tilde Q_x(U,W)=\langle i_UR^\triangledown,i_WR^\triangledown\rangle(x)$ and denote $Q_x(u,w):=\tilde Q_x(U,W):S^{n-1}\times S^{n-1}\to\mathbb{R}$. Then we have
			\begin{align*}
				\int_{S^{n-1}}Q_x(w,w)dw=\int_{S^{n-1}}\tilde Q_{x;ij}w^iw^jdw=tr(\tilde Q_x)\int_{S^{n-1}}(w^1)^2dw=\frac{tr(\tilde Q_x)}n|S^{n-1}|=\frac2n|S^{n-1}||R^\triangledown|^2(x).
			\end{align*}
			and thus
			\begin{align*}
				&\int_M\int_{B_\pi(x)}\sin^2\rho_y(x)\sum_i|F_{1i}|^2(x)dV(y)dV(x)\\
				=&\int_M\int_0^\pi\sin^2\rho d\rho\int_{S_\rho^{n-1}}Q_x(w,w)dwdV(x)\\
				=&\frac2n\int_M|R^\triangledown|^2(x)\int_{B_\pi(x)}\sin^2\rho_x(y)dV(y).
			\end{align*}
			Hence we have
			\begin{equation}\label{part2}
				\begin{split}
					&\int_M\int_MV_{kil}V_l\langle F_{ij},F_{jk}\rangle dV(y)dV(x)\\
					\le&\int_M|R^\triangledown|^2(x)\int_{B_\pi(x)} ((1+(n+\frac2n)(1-\delta))\sin^2\rho_x-2h_1(\rho_x)+2g_2(\rho_x))dV(y)dV(x)
				\end{split}
			\end{equation}
			Applying (\ref{D^2V})  and Proposition \ref{estiamte of curvature tensor} again, we obtain
			\begin{equation}\label{part3}
				\begin{split}
					&\int_M\int_M\frac12|R^\triangledown|^2V_{i,i}V_{l,l} dV(y)dV(x)\\
					=&\int_M\int_{B_\pi(x)}\frac12|R^\triangledown|^2(\sum_i\lambda_i)^2 dV(y)dV(x)\\
					\le&\int_M|R^\triangledown|^2(x)\int_{B_\pi(x)}\frac12(\cos^2\rho_x+(n-1)^2g_2(\rho_x)+2(n-1)h_2(\rho_x)) dV(y)dV(x).
				\end{split}
			\end{equation}
			and
			\begin{equation}\label{part4}
				\begin{split}
					&\int_M\int_M-2V_{k,k}V_{i,l}\langle dV(y)dV(x) F_{ij},F_{lj}\rangle dV(y)dV(x)\\
					=&\int_M\int_{B_\pi(x)}-2\sum_k\lambda_k\sum_{i,j}\lambda_i|F_{ij}|^2 dV(y)dV(x)\\
					\le&\int_M|R^\triangledown|^2(x)\int_{B_\pi(x)}-4nf_1(\rho_x)dV(y)dV(x).
				\end{split}
			\end{equation}
			By Lemma \ref{contolled by fgh}, we have
			\begin{equation}\label{part5}
				\begin{split}
					&\int_M\int_M2V_{k,i}V_{l,k}\langle F_{ij},F_{lj}\rangle dV(y)dV(x)\\
					=&2\int_M\int_{B_\pi(x)}\sum_{i,j}\lambda_i^2|F_{ij}|^2 dV(y)dV(x)\\
					\le&\int_M|R^\triangledown|^2(x)\int_{B_\pi(x)}4g_2(\rho_x)dV(y)dV(x).
				\end{split}
			\end{equation}
			and
			\begin{equation}\label{part6}
				\begin{split}
					&\int_M\int_M2V_{k,i}V_{l,j}\langle F_{ij},F_{kl}\rangle dV(y)dV(x)\\
					=&2\int_M\int_{B_\pi(x)}\sum_{i,j}\lambda_i\lambda_j|F_{ij}|^2dV(y)dV(x)\\
					\le&\int_M|R^\triangledown|^2(x)\int_{B_\pi(x)}4f_2(\rho_x) dV(y)dV(x).
				\end{split}
			\end{equation}
			Finally, we have
			\begin{equation}\label{part7}
				\begin{split}
					&V_sV_lR_{kjis}\langle F_{ik},F_{lj}\rangle+\frac12V_sV_lR_{ksji}\langle F_{ij},F_{kl}\rangle\\
					=&\sin^2\rho(R_{kji1}-\frac12R_{1jik})\langle F_{ij},F_{1j}\rangle\\
					=&\frac12\sin^2\rho(R_{1ijk}-R_{1kij})\langle F_{ij},F_{1j}\rangle\\
					=&0.
				\end{split}
			\end{equation}
			Summing up (\ref{part1}) through (\ref{part7}), we complete the proof of the proposition.
		\end{proof}
		For each given $n$ and $\delta$, Proposition \ref{int L(iV_yR)} provides an explicit expression for $\Phi_\delta$. More precisely, if we let
		\begin{align*}
			&\Psi=\frac12+((\frac2n+n)(1-\delta)-\frac{(n-1)\delta}2)\sin^2\rho,\\
			&\phi=\delta\sin^2\rho\cot^2(\sqrt\delta\rho),\\
			&\psi=\sqrt\delta\cos\rho\sin\rho\cot(\sqrt\delta\rho),
		\end{align*}
		then we have
		\begin{align*}
			\Phi_\delta(\rho)=\left\{
			\begin{array}{ll}
				\Psi-(\frac92n+\frac32)\cos^2\rho+(\frac{(n-1)^2}2+10)\phi+\frac32(n-1)\psi,&\rho\in(0,\frac\pi2)\\
				\Psi+\frac32(n-1)\cos^2\rho+(\frac{(n-1)^2}2+10)\phi-(4n+2)\psi,&\rho\in(\frac\pi2,\rho_\delta)\\
				\Psi+(\frac{(n-1)^2}2+\frac32(n-1)+10)\cos^2\rho-(4n+2)\psi,&\rho\in(\rho_\delta,\frac\pi{2\sqrt\delta}),\\
				\Psi+(\frac{(n-1)^2}2+\frac32(n-1)+10)\cos^2\rho-(\frac92n-\frac12)\phi-2\psi,&\rho\in(\frac\pi{2\sqrt\delta},\pi).
			\end{array}
			\right.
		\end{align*}		
		To proceed with estimating the second variation, the volume comparison theorem is required.
		\begin{proposition}[\cite{IC} Theorem III.4.1, Theorem III.4.3]
			Assume $M$ is a Riemannian manifold with dimensional $n$ and sectional curvature $K\in(\delta,1]$, then we have
			\begin{align}
				\sin^{n-1}\rho dS_{n-1}d\rho\le dV\le(\frac{\sin(\sqrt\delta\rho)}{\sqrt\delta})^{n-1}dS_{n-1}d\rho,
			\end{align}
			where $dS_{n-1}$ is the volume form of standard sphere $S^{n-1}$.
		\end{proposition}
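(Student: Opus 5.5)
We would prove this in geodesic polar coordinates about a fixed point $x$, using the Hessian comparison already invoked in (\ref{lambda_i}). Since the only use in the paper is inside $B_\pi(x)$ with $\pi\le \operatorname{inj}(M)$ after the normalization of Section \ref{sec:estimate of 2nd var}, we restrict to $\rho<\min\{\pi,\operatorname{inj}(M)\}$. There $\exp_x$ is a diffeomorphism from the punctured ball onto its image. We write $dV=J(\rho,\theta)\,d\rho\,dS_{n-1}(\theta)$, where $\theta\in U_xM\cong S^{n-1}$ and $J$ is the Jacobian of $\exp_x$ along the radial geodesic $\gamma_\theta$. Let $s_\kappa(\rho)=\sin(\sqrt\kappa\rho)/\sqrt\kappa$. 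The claim is then the pointwise inequality
\begin{align*}
s_1(\rho)^{n-1}\le J(\rho,\theta)\le s_\delta(\rho)^{n-1},\qquad 0<\rho<\pi .
\end{align*}

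The key identity is $\partial_\rho \log J(\rho,\theta)=\Delta\rho_x(\gamma_\theta(\rho))$. This holds because $J$ is the density of the pulled-back volume and $\operatorname{grad}\rho_x=\gamma_\theta'$, so $\partial_\rho J=\operatorname{div}(\operatorname{grad}\rho_x)\,J$. Taking the trace of the Hessian comparison (Theorem A of [GH]) on the orthogonal complement of $\gamma'$ gives
\begin{align*}
(n-1)\cot\rho\;\le\;\Delta\rho_x\;\le\;(n-1)\sqrt\delta\cot(\sqrt\delta\rho).
\end{align*}
The lower bound comes from $K\le1$ and requires the absence of conjugate points before $\pi$, which holds by Rauch since $K\le 1$. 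The upper bound comes from $K\ge\delta$ and needs only $\rho<\pi/\sqrt\delta$, which holds automatically. Since $(\log s_\kappa^{n-1})'=(n-1)\sqrt\kappa\cot(\sqrt\kappa\rho)$, we get that $\rho\mapsto J/s_1^{n-1}$ is nondecreasing and $\rho\mapsto J/s_\delta^{n-1}$ is nonincreasing on $(0,\pi)$.

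It remains to fix the initial values. As $\rho\to0$, $d\exp_x$ at $\rho\theta$ tends to the identity, so $J(\rho,\theta)=\rho^{n-1}(1+O(\rho^2))$, and likewise $s_\kappa(\rho)^{n-1}=\rho^{n-1}(1+O(\rho^2))$. Hence both ratios tend to $1$, and monotonicity yields $J/s_1^{n-1}\ge1$ and $J/s_\delta^{n-1}\le1$, which is the asserted two-sided bound. Alternatively, one can compare Jacobi fields directly: write $J=\det(Y_2,\dots,Y_n)/\rho^{\,0}$ with $Y_i$ the Jacobi fields vanishing at $x$ with $Y_i'(0)=e_i$. Rauch's theorem bounds $|Y_i|$, but a determinant needs the matrix-Riccati version. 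This is why we route the argument through $\Delta\rho_x$.

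The main obstacle is the lower (Günther) inequality. The upper (Bishop) inequality is a trace statement needing only $\operatorname{Ric}\ge(n-1)\delta$. The lower bound, by contrast, needs the full matrix Hessian comparison for an upper sectional curvature bound, together with the fact that no conjugate points occur before $\pi$. If the trace form of [GH] were unavailable, we would instead use the Riccati equation $S'+S^2+R_{\gamma'}=0$ for the shape operator $S$ of distance spheres. We would compare $S$ with $\cot\rho\,\mathrm{Id}$ via the standard matrix Riccati comparison, using $R_{\gamma'}\le \mathrm{Id}$, and then take traces.
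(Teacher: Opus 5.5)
Your proof is correct, and it is the standard argument behind the Bishop and Günther comparison theorems that the paper does not prove but simply cites from Chavel (Theorems III.4.1 and III.4.3). There the same monotonicity of the Jacobian against the model density is obtained from $\partial_\rho\log J$ (the mean curvature of geodesic spheres), expressed through Jacobi fields rather than the Hessian comparison. Your explicit restriction to $\rho<\min\{\pi,\operatorname{inj}(M)\}$ supplies a hypothesis that the paper's statement leaves implicit. Its later use on $B_\pi(x)$ with $\operatorname{inj}(M)=\pi$ genuinely requires that restriction.
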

		For any $\delta\in(0,1)$, define
		\begin{align*}
			v_\delta(\rho)=\left\{
			\begin{array}{ll}
				(\frac{\sin(\sqrt\delta\rho)}{\sqrt\delta})^{n-1},&\Phi(\rho)\ge0\\
				\sin^{n-1}\rho.&\Phi(\rho)<0
			\end{array}
			\right.
		\end{align*}
		Using Proposition \ref{int L(iV_yR)}, we have
		\begin{align}\label{L(i_VR) contralled by Phi v}
			\int_M\mathscr{L}(i_{V_y}R^\triangledown)dV(y)\le|S^{n-1}|\int_0^\pi\Phi_\delta(\rho)v_\delta(\rho)d\rho\int_M|R^\triangledown|^2dV.
		\end{align}
		For any $n\ge5$, it can be verified that
		\begin{align*}
			\int_0^\pi\Phi_1(\rho)v_1(\rho)d\rho=\frac2{n^2}(4-n)<0.
		\end{align*}
		Hence 
		\begin{align}\label{delta(n)}
			\delta(n):=\inf\{\delta\in(0,1)\mid\int_0^\pi\Phi_\delta(\rho)v_\delta(\rho)d\rho<0\}
		\end{align}
		is well-defined.
		\begin{theorem}
			Let $M$ be a compact $n$-dimensional Riemannian manifold with $n\ge5$ and $\operatorname{inj}(M) \geq c_n\pi$ for some $c_n \in (0,1]$. There exists $\delta = \delta(n, \operatorname{inj}(M)) > 0$ such that if the sectional curvature satisfies $\delta < K \leq 1$, then every weakly stable Yang-Mills connection on $M$ is flat.
		\end{theorem}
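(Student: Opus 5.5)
The argument is an averaging one. Weak stability forces $\mathscr{L}(i_{V_y}R^\triangledown)\ge0$ for every $y$, whereas the average over $y\in M$ of this quantity is bounded above by a negative multiple of $\int_M|R^\triangledown|^2dV$ once $\delta$ is close enough to $1$. Hence $R^\triangledown\equiv0$.

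\textbf{Step 1: reduction to $\operatorname{inj}(M)\ge\pi$.} First treat the case $\operatorname{inj}(M)\ge\pi$, in which $V_y=V_y^\pi$ is admissible. Let $\nabla$ be a weakly stable Yang-Mills connection. Since $V_y\in W^{2,2}$ and $\mathscr{L}$ extends continuously to $W^{1,2}$ variations, approximate $i_{V_y}R^\triangledown$ by smooth $B$; this gives $\mathscr{L}(i_{V_y}R^\triangledown)\ge0$ for every $y$. Integrating in $y$ and applying Proposition \ref{int L(iV_yR)} together with (\ref{L(i_VR) contralled by Phi v}) yields
\begin{align*}
0\le\int_M\mathscr{L}(i_{V_y}R^\triangledown)dV(y)\le|S^{n-1}|\Big(\int_0^\pi\Phi_\delta(\rho)v_\delta(\rho)d\rho\Big)\int_M|R^\triangledown|^2dV .
\end{align*}

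\textbf{Step 2: choice of $\delta$.} It remains to choose $\delta<1$ so that $I(\delta):=\int_0^\pi\Phi_\delta v_\delta\,d\rho<0$. We know $I(1)=\frac{2}{n^2}(4-n)<0$ for $n\ge5$. Each ingredient $f_i,g_i,h_i,\Psi,\phi,\psi$ and $(\sin\sqrt\delta\rho/\sqrt\delta)^{n-1}$ depends continuously on $\delta$, uniformly on compact subsets of $(0,\pi)$, and is uniformly bounded near $\rho=0$, because $\sqrt\delta\sin\rho\cot(\sqrt\delta\rho)\to1$ there. The sign-switching choice in $v_\delta$ only alters the integrand by an amount bounded by $|\Phi_\delta|\,\big|(\sin\sqrt\delta\rho/\sqrt\delta)^{n-1}-\sin^{n-1}\rho\big|$, which tends to $0$ as $\delta\to1$.

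The only delicate region is near $\rho=\pi$. There the piece $\frac{\pi}{2\sqrt\delta}<\rho<\pi$ shrinks, and $\phi$ stays bounded because $\cot(\sqrt\delta\rho)$ is bounded for $\sqrt\delta\rho\le\sqrt\delta\pi<\pi$ and $\sin^2\rho\to0$. Dominated convergence therefore gives $I(\delta)\to I(1)$, so there is $\delta_0(n)<1$ with $I(\delta)<0$ for all $\delta\in(\delta_0,1)$. For such $\delta$ the display above forces $\int_M|R^\triangledown|^2=0$, and $\nabla$ is flat. In the notation of (\ref{delta(n)}), this requires $I<0$ on the whole interval $(\delta(n),1)$, not merely at its infimum. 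I would simply take $\delta$ in a neighbourhood of $1$ where continuity guarantees this.

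\textbf{Step 3: general injectivity radius $\operatorname{inj}(M)\ge c_n\pi$.} Here use $V_y^R$ with $R=c_n\pi$, supported in $B_R(y)$ inside the injectivity domain. Rescaling $g\mapsto (R/\pi)^{-2}g$ makes the injectivity radius at least $\pi$ and the curvature lie in $(\delta (R/\pi)^2,(R/\pi)^2]$. Rather than rescaling, I would redo Proposition \ref{int L(iV_yR)} with $f_R(\rho)=-\cos(\pi\rho/R)$. Every bound used there is comparison-theoretic: Hessian comparison gives (\ref{lambda_i}) with $\frac{\pi}{R}$-scaled trigonometric functions, Proposition \ref{estiamte of curvature tensor} applies, and the volume comparison holds for $\rho<R$. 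The result is an explicit $\Phi_{\delta,R}$ depending continuously on $(\delta,R)$, and $\delta$ is then chosen so that the corresponding integral is negative.

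\textbf{Main obstacle.} Making this work for $R<\pi$ is, I expect, the main difficulty. Negativity at $\delta=1$ held only because of exact cancellations on the round sphere with $R=\pi$. For $R<\pi$ the limiting integral need not be negative, so the argument may only close when $c_n$ is close to $1$, as is the case when $c_n=1$ in Corollary \ref{main corollary}. I would first verify the sign of the $\delta\to1$ limit as a function of $R$. If that fails, I would fall back on proving the theorem under $\operatorname{inj}(M)\ge\pi$, which Klingenberg's estimate supplies in the simply connected case.
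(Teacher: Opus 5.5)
Your Steps 1--2 are the paper's argument for $\operatorname{inj}(M)\ge\pi$. You average $\mathscr{L}(i_{V_y}R^\nabla)$ over $y$, bound the average by $|S^{n-1}|\int_0^\pi\Phi_\delta v_\delta\,d\rho\int_M|R^\nabla|^2$ via Proposition~\ref{int L(iV_yR)} and volume comparison, and use negativity of the $\rho$-integral for $\delta$ near $1$. Your dominated-convergence choice of a whole interval $(\delta_0,1)$ on which $I(\delta)<0$ is a real improvement. The paper's $\delta(n)$ in (\ref{delta(n)}) is only an infimum, and by itself it does not give negativity for every $\delta>\delta(n)$.

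Your proposal falls short of the statement as written only in Step~3, and the paper does no better there. It asserts a continuous $\Phi^R_\delta$ and defines $c_n$ (circularly) as the infimum of those $c$ for which some $\delta$ makes $\int\Phi^{c\pi}_\delta v_\delta$ negative. It never shows this set contains any $c<1$, and its final chain uses the $R=\pi$ integral anyway.

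Your suspicion is justified. As $R\to0$ the fields $V^R_y$ become localized gradient fields $\operatorname{grad}\phi(\cdot-y)$ in a nearly flat region. Averaging (\ref{2nd var locally}) over $y$, using that the averages of $D^2\phi\otimes D^2\phi$ are isotropic, gives a positive multiple of $\|\Delta\phi\|_{L^2}^2\int_M|R^\nabla|^2$. I also checked, by evaluating (\ref{2nd var locally}) in constant curvature $1$ (where every comparison estimate is an equality), that the average is already positive at $R=\pi/2$. So no choice of $\delta$ lets the comparison method reach $c_n\le\frac12$.

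You can, however, close Step~3 for every $c_n>\frac12$ without any new estimate:
\begin{itemize}
\item Suppose $K>\delta$ and $M$ is not simply connected. Bonnet--Myers gives $\operatorname{diam}(\tilde M)<\pi/\sqrt\delta$.
\item For a nontrivial deck transformation $\gamma$, a minimizing geodesic from $\tilde x$ to $\gamma\tilde x$ projects to a non-contractible geodesic loop of length $<\pi/\sqrt\delta$. Hence $\operatorname{inj}(M)<\pi/(2\sqrt\delta)$.
\item Choose $\delta\in(\delta_0,1)$ with $\delta\ge 1/(4c_n^2)$; this is possible because $c_n>\frac12$. Then $\operatorname{inj}(M)\ge c_n\pi$ forces $M$ to be simply connected.
\item Since $\delta\ge\frac14$, Klingenberg gives $\operatorname{inj}(M)\ge\pi$, and your Steps~1--2 apply.
\end{itemize}
For $c_n\le\frac12$ (e.g.\ $\mathbb{RP}^n$, lens spaces), neither your argument nor the paper's proves the statement.
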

		\begin{proof}
			For general injective radius $R$, using the same derivation, we can prove that
			\begin{align*}
				\int_M\mathscr{L}(i_{V^R_y}R^\triangledown)dV(y)\le|S^{n-1}|\int_0^\pi\Phi^R_\delta(\rho)v_\delta(\rho)d\rho\int_M|R^\triangledown|^2dV,
			\end{align*}
			where $\Phi_\delta^R$ is a continue map with regard to $\delta$ and $R$ with $\Phi_\delta^\pi=\Phi_\delta$. Define $c_n\in(0,1]$ to be
			\begin{align}
				c_n=\inf\{c\in(0,1]\mid \textrm{there exists }\delta\in(0,1)\textrm{ such that }\int_0^{c_n\pi}\Phi_\delta^{c_n\pi}(\rho)v_\delta(\rho)<0\}.
			\end{align}
			For any $R\ge c_n\pi$, we also define
			\begin{align}\label{delta(n,R)}
				\delta(n,R):=\inf\{\delta\in(0,1)\mid\int_0^R\Phi_\delta^R(\rho)v_\delta(\rho)d\rho<0\}.
			\end{align}
			Assume $\nabla$ be a weakly stable Yang-Mills connection. Then by (\ref{L(i_VR) contralled by Phi v}) and the definition of $\delta(n,R)$, we have
			\begin{align*}
				0\le\int_M\mathscr{L}(i_{V_y}R^\triangledown)dV(y)\le|S^{n-1}|\int_0^\pi\Phi_\delta(\rho)v_\delta(\rho)d\rho\int_M|R^\triangledown|^2dV\le0,
			\end{align*}
			which implies $R^\triangledown=0$. Then we finish the proof.
		\end{proof}
		\begin{remark}
			Assume $M$ is simply-connected. The values of the geometric constant $\delta(n)$ were computed for dimensions $n = 5$ through $20$. All computations were performed using MATLAB R2024a with the Symbolic Math Toolbox. The results are summarized in Table~1
		\end{remark}
		\begin{table}[H]  
			\centering
			\caption{Computed values of $\delta(n)$ for dimensions $5 \leq n \leq 20$}
			\label{tab:Cn_values}
			\begin{tabular}{cccc}
				\toprule
				Dimension $n$ & $\delta(n)$ & Dimension $n$ & $\delta(n)$ \\
				\midrule
				5  & 0.94888 & 13 & 0.96278 \\
				6  & 0.94551 & 14 & 0.96539 \\
				7  & 0.94582 & 15 & 0.96778 \\
				8  & 0.94789 & 16 & 0.96998 \\
				9  & 0.95073 & 17 & 0.97198 \\
				10 & 0.95384 & 18 & 0.97381 \\
				11 & 0.95697 & 19 & 0.97548 \\
				12 & 0.95997 & 20 & 0.97700 \\
				\bottomrule
			\end{tabular}
		\end{table}

\end{document}